\begin{document}

\def\HH{{\mathcal{H}}}
\def\orb{{\operatorname{orb}}}
\def\diam{{\operatorname{diam}}}
\def\II{{\mathfrak{I}}}
\def\PO{{\operatorname{PO}}}
\def\Cl{{\operatorname{Cl}}}
\def\Max{{\operatorname{-Max}}}
\def\XX{{\mathfrak X}}
\def\YY{{\bf{Y}}}
\def\BBB{{\mathcal B}}
\def\inv{{\operatorname{inv}}}
\def\emph{\it}
\def\Int{{\operatorname{Int}}}
\def\Spec{\operatorname{Spec}}
\def\Bin{{\operatorname{B}}}
\def\n{\operatorname{b}}
\def\N{{\operatorname{GB}}}
\def\BC{{\operatorname{BC}}}
\def\dlog{\frac{d \log}{dT}}
\def\Sym{\operatorname{Sym}}
\def\Nr{\operatorname{Nr}}
\def\lbrack{{\{}}
\def\rbrack{{\}}}
\def\burnside{\operatorname{B}}
\def\Sym{\operatorname{Sym}}
\def\Hom{\operatorname{Hom}}
\def\Inj{\operatorname{Inj}}
\def\Aut{{\operatorname{Aut}}}
\def\Mor{{\operatorname{Mor}}}
\def\Map{{\operatorname{Map}}}
\def\CMap{{\operatorname{CMap}}}
\def\GMaps{G{\operatorname{-Maps}}}
\def\Fix{{\operatorname{Fix}}}
\def\res{{\operatorname{res}}}
\def\ind{{\operatorname{ind}}}
\def\inc{{\operatorname{inc}}}
\def\coind{{\operatorname{cnd}}}
\def\Equiv{{\mathcal{E}}}
\def\W{\operatorname{W}}
\def\F{\operatorname{F}}
\def\witt{\operatorname{gh}}
\def\ngh{\operatorname{ngh}}
\def\Fm{{\operatorname{Fm}}}
\def\bij{{\iota}}
\def\mk{{\operatorname{mk}}}
\def\km{{\operatorname{mk}}}
\def\VV{{\bf{V}}}
\def\ff{{\bf{f}}}
\def\ZZ{{\mathbb Z}}
\def\Zhat{{\widehat{\mathbb Z}}}
\def\CC{{\mathbb C}}
\def\PP{{\mathbf p}}
\def\L{{\mathbf L}}
\def\DD{{\mathbb D}}
\def\EE{{\mathbb E}}
\def\MM{{\mathbb M}}
\def\JJ{{\mathbb J}}
\def\NN{{\mathbb N}}
\def\RR{{\mathbb R}}
\def\QQ{{\mathbb Q}}
\def\FF{{\mathbb F}}
\def\mm{{\mathfrak m}}
\def\nn{{\mathfrak n}}
\def\jj{{\mathfrak j}}
\def\aaa{{{{\mathfrak a}}}}
\def\bbb{{{{\mathfrak b}}}}
\def\ppp{{{{\mathfrak p}}}}
\def\qqq{{{{\mathfrak q}}}}
\def\PPP{{{{\mathfrak P}}}}
\def\BB{{\mathfrak B}}
\def\jj{{\mathfrak J}}
\def\LL{{\mathfrak L}}
\def\qq{{\mathfrak Q}}
\def\rr{{\mathfrak R}}
\def\cc{{\mathfrak S}}
\def\TT{{\mathcal{T}}}
\def\SS{{\mathcal S}}
\def\UU{{\mathcal U}}
\def\AA{{\mathcal A}}
\def\BB{{\mathcal B}}
\def\Primes{{\mathcal P}}
\def\genS{{\langle S \rangle}}
\def\genT{{\langle T \rangle}}
\def\bT{\mathsf{T}}
\def\bD{\mathsf{D}}
\def\bC{\mathsf{C}}
\def\VV{{\bf V}}
\def\ff{{\bf f}}
\def\uu{{\bf u}}
\def\aa{{\bf{a}}}
\def\bb{{\bf{b}}}
\def\zero{{\bf 0}}
\def\rad{\operatorname{rad}}
\def\End{\operatorname{End}}
\def\id{\operatorname{id}}
\def\mod{\operatorname{mod}}
\def\im{\operatorname{im}\,}
\def\ker{\operatorname{ker}}
\def\coker{\operatorname{coker}}
\def\ord{\operatorname{ord}}
\def\li{\operatorname{li}}
\def\Ei{\operatorname{Ei}}
\def\Ein{\operatorname{Ein}}
\def\Ri{\operatorname{Ri}}
\def\Rie{\operatorname{Rie}}
\def\degl{\operatorname{deglog}}

\newtheorem{theorem}{Theorem}[section]
\newtheorem*{theorem*}{Theorem}
\newtheorem{proposition}[theorem]{Proposition}
\newtheorem{corollary}[theorem]{Corollary}
\newtheorem{lemma}[theorem]{Lemma}
\newtheorem{unnumblemma}{Lemma}[section]

\theoremstyle{definition}
\newtheorem{example}[theorem]{Example}
\newtheorem{definition}[theorem]{Definition}
\newtheorem*{definition*}{Definition}
\newtheorem{remark}[theorem]{Remark}
\newtheorem{problem}[theorem]{Problem}
\newtheorem{conjecture}[theorem]{Conjecture}

 \newenvironment{map}[1]
   {$$#1:\begin{array}{rcl}}
   {\end{array}$$
   \\[-0.5\baselineskip]
 }

 \newenvironment{map*}
   {\[\begin{array}{rcl}}
   {\end{array}\]
   \\[-0.5\baselineskip]
 }

 \newenvironment{nmap*}
   {\begin{eqnarray}\begin{array}{rcl}}
   {\end{array}\end{eqnarray}
   \\[-0.5\baselineskip]
 }

 \newenvironment{nmap}[1]
   {\begin{eqnarray}#1:\begin{array}{rcl}}
   {\end{array}\end{eqnarray}
   \\[-0.5\baselineskip]
 }

\newcommand{\eq}{eq.\@\xspace}
\newcommand{\eqs}{eqs.\@\xspace}
\newcommand{\diagram}{diag.\@\xspace}

\numberwithin{equation}{section}

%\begin{frontmatter}

\title{Asymptotic expansions of the prime counting function \\ (final version)}

\author{Jesse Elliott \\  
California State University, Channel Islands \\
{\tt jesse.elliott@csuci.edu}}

\maketitle

\begin{abstract}
We provide several asymptotic expansions of the prime counting function $\pi(x)$ and related functions.   We define an {\it asymptotic continued fraction expansion} of a complex-valued function of a real or complex variable to be a possibly divergent continued fraction whose approximants provide an asymptotic expansion of the given function.   We show that,  for each positive integer $n$,  two well-known continued fraction expansions of the exponential integral function $E_n(z)$ correspondingly yield two asymptotic continued fraction expansions of $\pi(x)/x$.  We prove this by first establishing some general results about asymptotic  continued fraction expansions.   We show, for instance, that the ``best'' rational function approximations of a function possessing an asymptotic Jacobi  continued fraction expansion are precisely the approximants of the continued fraction, and as a corollary we determine all of the best rational function approximations of the function $\pi(e^x)/e^x$.  Finally, we generalize our results on $\pi(x)$ to any arithmetic semigroup satisfying Axiom A, and thus to any number field.  \\

\noindent {\bf Keywords:}  prime counting function, prime number theorem, logarithmic integral, exponential integral,  asymptotic expansion, continued fraction, Stieltjes transform,  number field, arithmetic semigroup. \\

\noindent {\bf MSC:}   11N05, 11N45, 11N80, 11R42, 30B70,  44A15
\end{abstract}

\bigskip

\tableofcontents

\section{Introduction}

\subsection{The prime counting function $\pi(x)$}

This paper concerns the asymptotic behavior of the function $\pi: \RR_{>0} \longrightarrow \RR$ that for any $x > 0$ counts the number of primes less than or equal to $x$: $$\pi(x) =   \# \{p \leq x: p \mbox{ is prime}\}, \quad x > 0.$$  The function $\pi(x)$ is known as the {\bf prime counting function}. We call the related function $\PP: \RR_{> 0} \longrightarrow \RR$ defined by $$\PP(x) = \frac{\pi(x)}{x}, \quad x > 0,$$ the {\bf prime density function}.  The number $\PP(n)$ for any positive integer $n$ represents the probability that a randomly selected integer from $1$ to $n$ is prime.  % (The reader who is familiar with the asymptotic expansion (\ref{asex2}) of $\pi(x)$ below can skip to Section 1.3.)

The celebrated {\bf prime number theorem}, proved independently by de la Vall\'ee Poussin \cite{val1} and Hadamard \cite{had}  in 1896,  states that
\begin{align*}
\pi(x) \sim \frac{x}{\log x} \ (x \to \infty),
\end{align*}
where $\log x$ is the natural logarithm.   The theorem can be expressed in the equivalent form $$\lim_{x \to \infty} x^{\PP(x)} = e,$$ which shows that the number $e$, like several other mathematical constants, encodes information about the distribution of the primes.   

The prime number theorem was first conjectured by Gauss in 1792 or 1793, according to Gauss' own recollection in his famous letter to Encke in 1849  \cite{gau}. The first actual published statement of something close to the conjecture was made by Legendre in 1798, which he refined further in 1808.  Following Legendre and Gauss, we let $A$ denote the unique function $[2,\infty) \longrightarrow \RR$ such that
$$\pi(x) = \frac{x}{\log x - A(x)}$$
for all $x \geq 2$, so that $A(x) = \log x - \frac{1}{\PP(x)}$.     Legendre's 1808 conjecture was that the limit $L := \lim_{x \to \infty} A(x)$ exists and is approximately equal to $1.08366$.   The limit $L$ is now often referred to as {\bf Legendre's constant}.   In 1848, Chebyshev further refined Legendre's conjecture by proving that if Legendre's constant exists then it must equal $1$ \cite{cheb}.  %  A graph of the function $A(x)$ is provided in Figure \ref{graphAAA}.   As can be seen in Figure \ref{graphB}, the function $A(x)$ is exhibited more suggestively on a lin-log scale as the function $A(e^x) = x-\frac{1}{\PP(e^x)}$.  
%\begin{figure}[ht!]
%\centering
%\includegraphics[width=90mm]{}
%\caption{Graph of $A(x) = \log x - \frac{1}{\PP(x)}$  \label{graphAAA}}
%\end{figure}
%\begin{figure}[ht!]
%\centering
%\includegraphics[width=90mm]{}
%\caption{Graph of $A(e^x) = x - \frac{1}{\PP(e^x)}$  \label{graphB}}
%\end{figure}
Regarding the function $A$, Gauss made in his 1849 letter to Encke some very prescient remarks (English translation):
\begin{quote}
It appears that, with increasing $n$, the (average) value of $A$ decreases; however, I dare not conjecture whether the limit as $n$ approaches infinity is $1$ or a number different from 1.  I cannot say that there is any justification for expecting a very simple limiting value; on the other hand, the excess of $A$ over $1$ might well be a quantity of the order of $\frac{1}{\log n}$.
\end{quote}
Gauss' speculations  turned out to be correct, in that
\begin{align}\label{eq3b}
A(x) -1 \sim \frac{1}{\log x} \ (x \to \infty).
\end{align} 
A consequence of (\ref{eq3b}) is that Legendre's constant exists and is equal to $1$, which in turn implies the prime number theorem.

In 1838,  Dirichlet observed that $\pi(x)$ can be well approximated by the {\bf logarithmic integral function} 
$$\li(x) = \int_0^x \frac{dt}{\log t},$$
where the Cauchy principal value of the integral is assumed. Since it is straightforward to show using integration by parts that  $\li(x) \sim \frac{x}{\log x} \ (x \to \infty)$,
the prime number theorem is equivalent to
$$\pi(x) \sim \li(x) \ (x \to \infty).$$
The {\bf prime number theorem with error term}, proved  in 1899 by de la Vall\'ee Poussin \cite{val2}, states that there exists a constant $c > 0$ such that
$$\pi(x) = \li(x) + O\left(xe^{-c\sqrt{\log x}} \right) \ (x \to \infty).$$
Since $x^{t} = o( e^{c\sqrt{x}})  \ (x \to  \infty)$ for all $t>0$ and all $c > 0$, the prime number theorem with error term implies that
\begin{align}\label{PNTET}
\PP(x) = \frac{\li(x)}{x} + o\left(\frac{1}{(\log x)^{t}}\right) \ (x \to \infty),
\end{align}
for all $t > 0$, where also $\frac{\li(x)}{x}  = \frac{1}{x}\int_0^x \frac{dt}{\log t}$ is the principal average value of $\frac{1}{\log t}$ on the interval $[0,x]$.  This weaker form of the  prime number theorem with error term implies (\ref{eq3b}), and it is sufficient for many other applications, incuding all of those proved in this paper.   By contrast, more advanced research on the prime counting function is directed at {\it strengthening} the prime number theorem with error term, ideally to the conjecture $\PP(x) = \frac{\li(x)}{x} + O\left(\frac{\log x}{\sqrt{x}}\right) \ (x \to \infty)$, which,  famously,  is equivalent to the Riemann hypothesis \cite{koch}.

\subsection{Known asymptotic expansions of $\pi(x)$}

%According to  \cite{boven}, the notion of an {\it asymptotic expansion} of a real or complex function was first introduced in 1886, independently by Stieltjes and Poincar\'e, both with respect to the asymptotic series $\{\frac{1}{x^n}\}$ at $x = \infty$.  

In this section we assume the reader is familiar with the notion of an {\it asymptotic expansion} of a function  \cite[Chapter 2]{copson}  \cite[Chapter 1]{erd} \cite[Chapter 1]{est}. Section 2.1 contains all of the background on asymptotic expansions needed for this paper. % References on asymptotic expansions  include \cite[Chapter 2]{copson},  \cite[Chapter 1]{erd}, and \cite[Chapter 1]{est}.

In 1848  \cite[p.\ 153]{cheb}, Chebyshev noted the asymptotic expansion
\begin{align}\label{asex}
\frac{\li(x)}{x} \simeq \sum_{k = 0}^\infty {\frac {k!}{(\log x)^{k+1}}} \ (x \to \infty).
\end{align}
This asymptotic expansion is now well known (see \cite[Section 10.3]{stop}, for example) and follows from  the fact that  $$\li(x) - \sum_{k = 0}^{n-1} \frac{k!x}{(\log x)^{k+1}} = \int_e^x \frac{n!dt} {(\log t)^{n+1}} +C_n \sim \frac{n!x}{(\log x)^{n+1}} \ (x \to \infty)$$ for all nonnegative integers $n$, where $C_n$ is a constant  and the given equality is proved by repeated integration by parts. 
 From (\ref{PNTET}) and  (\ref{asex}), we see that $\PP(x)$ has the same asymptotic expansion as $\frac{\li(x)}{x}$, namely,
\begin{align}\label{asex2}
\PP(x) \simeq \sum_{k = 0}^\infty {\frac {k!}{(\log x)^{k+1}}} \ (x \to \infty).
\end{align}
In fact, assuming (\ref{asex}), it follows easily  %(using Lemma \ref{asympprop} of Section 2.1, for example)
 that (\ref{PNTET})  and (\ref{asex2}) are equivalent.  Thus, the asymptotic expansion (\ref{asex2}) carries essentially the same information as the weak version (\ref{PNTET}) of the prime number theorem with error term.  Note that the series $\sum_{k = 0}^\infty {\frac {k!}{(\log x)^{k+1}}}$ is divergent for all $x$, and the definition of asymptotic expansions equates (\ref{asex2}) with the statement
\begin{align*}
\PP(x) - \sum_{k = 0}^{n-1} {\frac {k!}{(\log x)^{k+1}}} \sim \frac{n!}{(\log x)^{n+1}} \ (x \to \infty), \quad \mbox{ for all } n \geq 1.
\end{align*}
which for $n = 2$ can be shown to yield  (\ref{eq3b}).

% Since one has
%$$\frac{1}{\log x-a} = \sum_{k = 0}^\infty  {\frac {a^k}{(\log x)^{k+1}}} = \frac{1}{\log x}+ \frac{a}{(\log x)^2}+\frac{a^2}{(\log x)^3}+ O \left(\frac{1}{(\log x)^4} \right)\ (x \to \infty)$$
%for all $a \in \RR$ and all sufficently large $x$,  another immediate consequence of  (\ref{asex2}) is that
%$$\PP(x) -\frac{1}{\log x-1}  \sim \frac{1}{(\log x)^{3}}   \ (x \to \infty),$$
%while
%$$\PP(x) -\frac{1}{\log x-a} \sim \frac{1-a}{(\log x)^{2}}   \ (x \to \infty)$$ 
%for all $a \neq 1$,
%which are both seen to be in sharp contrast to (\ref{PNTET}).  It follows that $\li(x)$ for large $x$ is a much better approximation of $\pi(x)$ than is the function  $ \frac{x}{\log x-a} $ for any $a \in \RR$, just as Dirichlet and Gauss had suspected.

%A close relative of the logarithmic integral function $\li(x)$ is the {\bf exponential integral function}
%$$\Ei(x) = -\int_{-x}^\infty \frac{e^{-t}}{t} dt, \quad x \in \RR,$$ 
%where as usual the Cauchy principal value is assumed at the singularity.  Indeed, one has
%$$\Ei(x) = \li(e^x)$$
%for all real $x$, which can be verified up to an additive constant by noting that they have the same derivative. 

An interesting reformulation of the asymptotic expansion (\ref{asex2}) is  a result  proved by Panaitopol in 2000 \cite[Theorem]{pan}, namely,
\begin{align}\label{panthm}
\pi(x) = \frac{x}{\log x - 1-\sum_{n = 1}^{N} \frac{k_{n}}{(\log x)^{n}} + o \left( \frac{1}{(\log x)^{N}} \right)}  \ (x \to \infty), \quad  \text{ for all } N \geq 0,
\end{align}
where the sequence $\{k_n\}$ is determined by the recurrence relation appearing  in both \cite[Theorem]{pan} and  \cite[Theorem 5.2 (for $r = 2$)]{hall}  and is sequence A233824 of  the On-Line Encyclopedia of Integer Sequences (OEIS).  By those two theorems, $k_n$ for any $n \geq 0$ is the number of subgroups of index $n$ of the free group on two generators, and the sequence  $\{k_n\}$ has its first several terms given by  $0, 1, 3, 13, 71, 461, 3447, 29093, \ldots$.  It is also known that  $k_n = I_{n+1}$ for all $n$,  where $\{I_n\}$ is OEIS sequence A003319 and $I_n$ for any nonnegative integer $n$ is equal to the number of indecomposable permutations of $\{1,2,3\ldots, n\}$, where a permutation of $\{1,2,3,\ldots,n\}$ is said to be {\bf indecomposable} if it does not fix $\{1,2,3,\ldots,j\}$ for any  $1 \leq j < n$.

One can see that Panaitopol's result (\ref{panthm}) is equivalent to  (\ref{asex2}) as follows.  First, note that the asymptotic expansion  (\ref{asex2}), by the change of variable $x \longmapsto e^x$, has the equivalent ``linearized'' representation as
\begin{align}\label{pex}
\PP(e^x) \simeq \frac{1}{x} \sum_{k = 0}^\infty {\frac {k!}{x^{k}}} \ (x \to \infty).
\end{align}
%(This method of linearizing the prime counting function supplements the more standard method of comparing it with the Chebyshev prime counting functions $\vartheta(x) \sim \psi(x) \sim x$.)   
Reciprocating the asymptotic expansion of $\PP(e^x)$ above yields the asymptotic expansion
\begin{align}\label{expans2}
A(e^x) = x- \frac{1}{\PP(e^x)}\simeq \sum_{k = 0}^\infty {\frac {N_k}{x^{k}}} \ (x \to \infty),
\end{align}
where $\{N_k\}$ is the sequence whose  generating function $\sum_{k = 0}^\infty N_k z^k$ is given by
$\frac{1}{z}-\frac{1}{z\sum_{k = 0}^\infty k!z^k}$.  But by \cite[(1) and (2)]{comt} the sequence $\{I_k\}$ has generating function $\sum_{k = 0}^\infty I_k z^k = 1-\frac{1}{\sum_{k = 0}^\infty k!z^k}$.  It follows that $N_k = I_{k+1}$ for all $k$.   Changing back to a linear scale, (\ref{expans2}) becomes
\begin{align}\label{Axsym}
A(x)  & \simeq \sum_{k = 0}^{\infty} \frac{I_{k+1}}{(\log x)^{k}} \ (x \to \infty),
\end{align}
which is a restatement  (\ref{panthm}) in terms of asymptotic expansions.   Clearly this argument is reversible, and therefore (\ref{asex2})--(\ref{Axsym}) are all equivalent.  These statements show in particular that the sequences $\{n!\}$, $\{k_n\}$, and $\{I_n\}$  encode information about the distribution of the primes.  We provide several other examples of this phenomenon in Section 4.2. 

   A graph of the function $A(e^x)$ and of the first five terms of the asymptotic expansion (\ref{expans2}) of $A(e^x)$ are provided in  Figure \ref{primeasym2b}.    Note, as a consequence of (\ref{Axsym}),  that $A(x) - 1 - \frac{1}{\log x} \sim \frac{3}{(\log x)^2} \ (x \to \infty),$ which in turn implies (\ref{eq3b}). 

\begin{figure}[ht!]
\centering
\includegraphics[width=110mm]{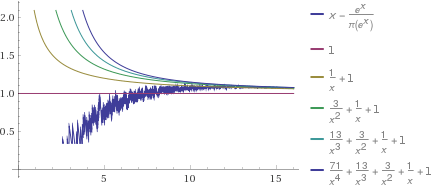}
\caption{Approximations of $A(e^x)$ \label{primeasym2b}}
\end{figure}

%\item  The  function  $$q(x)  =    \left.
 % \begin{cases}
  % \PP(e^{1/x^2}) & \text{if } x \neq 0 \\
 %  0   & \text{if } x = 0.
% \end{cases}
% \right.$$ is infinitely differentiable at $0$ with 
%$$q^{(n)}(0) =    \left.
 % \begin{cases}
 %  n!(n/2-1)!  & \text{if } n \text{ is even} \\
 %   0  & \text{if }   n \text{ is odd}
% \end{cases}
 %\right.$$
%for all $n \geq 1$.

\subsection{Main results on $\pi(x)$}

%In this section we derive two known asymptotic expansions of the prime counting function from a weak version of the prime number theorem with error term.

The primary goal of this  paper is to derive various asymptotic expansions of the prime counting function, all of which either generalize or  are in some sense equivalent to  (\ref{asex2}).  The most novel of these are {\it asymptotic continued fraction expansions.}  We say that an {\bf asymptotic continued fraction expansion} of a real or complex function $f(x)$ is a possibly divergent continued fraction whose approximants  $w_n(x)$ provide an asymptotic expansion
$$f(x)- w_0(x) \simeq  \sum_{n = 1}^\infty(w_n(x)-w_{n-1}(x)) \ (x \to \infty)$$ of $f(x)-w_0(x)$ with respect to the sequence $\{w_{n}(x)-w_{n-1}(x)\}_{n = 1}^\infty$, which is assumed to be an asymptotic sequence, that is,   to satisfy $w_{n+1}(x)-w_{n}(x) = o(w_n(x)-w_{n-1}(x)) \ (x \to \infty)$ for all $n \geq 1$.  A more detailed definition is given in Section 2.2.   In Sections 2.3--2.5, we establish some general function-theoretic results about asymptotic  continued fraction expansions that have interest independently of the prime counting function.   The results in those sections, namely, Theorems \ref{wsimJ} and \ref{wsimG} and Corollary \ref{wsim}, are natural extensions of the theory of continued fractions to {\it asymptotic} continued fraction expansions that are worth making explict.    Theorem \ref{wsimJ}, for example, shows that the ``best'' rational function approximations of a function possessing an asymptotic Jacobi  continued fraction expansion are precisely the approximants of the continued fraction.   % Our main result, Theorem \ref{wsimG}, shows that any asymptotic continued fraction expansion at $0$ with respect to a continued fraction that has nonzero terms $a_n(z)$ in $z\CC[z]$ and $b_n(z)$ in $\CC[z]\backslash z\CC[z]$ is equivalent to an asymptotic expansion with respect to the asymptotic sequence $\{z^n\}$  at $0$. 

Theorem \ref{maincontthm1} below provides two asymptotic continued fraction expansions of the function $\PP(x)$.  We  initially conjectured the theorem on the basis of the following observations.  Let $a \in \RR$.    Since
$$\frac{1}{\log x-a} = \sum_{k = 0}^\infty  {\frac {a^k}{(\log x)^{k+1}}} = \frac{1}{\log x}+ \frac{a}{(\log x)^2}+\frac{a^2}{(\log x)^3}+ O \left(\frac{1}{(\log x)^4} \right)\ (x \to \infty)$$
for all $x > e^a$,  the asymptotic expansion   (\ref{asex2}) of $\PP(x)$ implies that
\begin{align}\label{w1}
\PP(x) -\frac{1}{\log x-a} \sim \begin{cases} \displaystyle \cfrac{1-a}{(\log x)^{2}}   \ (x \to \infty)  & \text{if } a \neq 1 \\
 \cfrac{1}{(\log x)^{3}}   \ (x \to \infty)  & \text{if } a  = 1.
\end{cases}
\end{align}
 It follows that $a = 1$ is optimal in the approximation $\PP(x) \approx \frac{1}{\log x-a}$.
% (and also that $\li(x)$ for large $x$ is a much better approximation of $\pi(x)$ than is the function  $ \frac{x}{\log x-a} $ for any $a \in \RR$, just as Dirichlet and Gauss had suspected).   
Since $A(x)-1 \sim \frac{1}{\log x} \ (x \to \infty)$,  it is natural, given the analysis above, to seek an $a \in \RR$ that is optimal in the approximation $A(x) - 1 \approx \frac{1}{\log x -a}$.  A similar analysis shows that Panaitopol's asymptotic expansion (\ref{Axsym}) of $A(x)$ yields the answer $a  = I_3= 3$:
\begin{align*}
A(x)-1 -\frac{1}{\log x-a} \sim \begin{cases}  \cfrac{3-a}{(\log x)^{2}}   \ (x \to \infty)  & \text{if } a \neq 3 \\
 \cfrac{4}{(\log x)^{3}}   \ (x \to \infty)  & \text{if } a  = 3.
\end{cases}
\end{align*}
(where $4 = I_4-3^2$).
We may then repeat the argument yet again by analyzing the unique function $B(x)$ such that $A(x) -1 = \frac{1}{\log x -B(x)}$.  This can be done by hand, but WolframAlpha (and Theorem \ref{wsimJ})  offers a shortcut: compute $\lim_{x \to \infty} ((B(x)-3)\log x)$ to find that the limit is $4$, so that $B(x) -3 \sim \frac{4}{\log x} \ (x \to \infty)$; then compute $\lim_{x \to \infty}\left(\log x - \frac{4}{B(x)-3}\right)$ to find that the limit is $5$, so that $b = 4$ and $a = 5$ are optimal in the approximation $B(x) - 3 \approx \frac{b}{\log x -a}$.  This can be repeated several more times.  These observations eventually led us to formulate the notion of an asymptotic continued fraction expansion  and  to conjecture that $\frac{1}{\log x - 1}$,  $\frac{1}{\log x - 1 \, -} \, \frac{1}{\log x - 3}$, and  $\frac{1}{\log x - 1 \, -} \, \frac{1}{\log x - 3 \, -} \, \frac{4}{\log x - 5 \, -}$ are the initial approximants of an asymptotic continued fraction expansion of $\PP(x)$. 

To prove the conjecture and derive the full expansion, we use Corollary \ref{wsim}, along with the asymptotic expansion (\ref{asex2}) and the result of Stieltjes \cite[No.\ 57]{stie} that the formal continued fraction $\frac{z} {1 \, -} \ \frac{z} {1 \, -}  \ \frac{z} {1 \, -} \ \frac{2z} {1 \, -}  \ \frac{2z} {1 \, -} \ \frac{3z} {1 \, -} \ \frac{3z} {1 \, -}  \, \cdots$
converges $(z)$-adically in $\CC[[z]]$ to  $\sum_{k = 0}^\infty k! z^{k+1}$,
to prove the following.

\begin{theorem}\label{maincontthm1}
One has the following asymptotic continued fraction expansions.
\begin{enumerate}
\item $\PP(x) \, \simeq \,  \cfrac{\frac{1}{\log x}}{1 \,-} \ \cfrac{\frac{1}{\log x}}{1 \,-}\  \cfrac{\frac{1}{\log x}}{1 \,-}\  \cfrac{\frac{2}{\log x}}{1 \,-}\  \cfrac{\frac{2}{\log x}}{1 \,-}\  \cfrac{\frac{3}{\log x}}{1 \,-} \  \cfrac{\frac{3}{\log x}}{1 \,-}\  \cfrac{\frac{4}{\log x}}{1 \,-}  \ \cfrac{\frac{4}{\log x}}{1 \,-}  \ \cdots \ (x \to \infty)$.
\item $\PP(x)  \,  \simeq \, \displaystyle \frac{1}{\log x -1\,-} \  \frac{1}{\log x  - 3 \,-}\  \frac{4}{\log x-5\,-}\  \frac{9}{\log x - 7 \,-} \ \frac{16}{\log x-9\,-}\   \cdots \  (x \to \infty)$.
\item $A(x) \,  \simeq \,   \cfrac{\frac{1}{\log x}}{1 \,-}\  \cfrac{\frac{1}{\log x}}{1 \,-}\  \cfrac{\frac{2}{\log x}}{1 \,-}\  \cfrac{\frac{2}{\log x}}{1 \,-}\  \cfrac{\frac{3}{\log x}}{1 \,-} \  \cfrac{\frac{3}{\log x}}{1 \,-}\  \cfrac{\frac{4}{\log x}}{1 \,-}  \ \cfrac{\frac{4}{\log x}}{1 \,-}  \ \cdots \ (x \to \infty)$.
\item $A(x)  \,  \simeq \, \displaystyle 1+  \frac{1}{\log x  - 3 \,-}\  \frac{4}{\log x-5\,-}\  \frac{9}{\log x - 7 \,-} \ \frac{16}{\log x-9\,-}\   \cdots \  (x \to \infty)$.
\end{enumerate}
\end{theorem}

%Note that the first asymptotic expansion in Theorem \ref{maincontthm1} can be rewritten as
%$$\PP(x) \sim \, \frac{1}{ \log x - \frac{1}{1-\frac{1}{\log x-\frac{2}{1-\frac{2}{\log x - \frac{3}{1-\frac{3}{\log x-\cdots}}}}}}} \ \ (x \to \infty)$$  
%and as 
%$$\PP(x) \sim \, \frac{\sfrac{1}{\log x}}{1 - \frac{\sfrac{1}{\log x}}{1-\frac{\sfrac{1}{\log x}}{1-\frac{\sfrac{1}{\log \sqrt{x}}}{1-\frac{\sfrac{1}{\log  \sqrt{x}}}{1 - \frac{\sfrac{1}{\log  \sqrt[3]{x}}}{1-\frac{\sfrac{1}{\log  \sqrt[3]{x}}}{1-\cdots}}}}}}} \ \ (x \to \infty).$$

%$$\PP(e^{1/x}) \sim \, \frac{x}{1 - \frac{x}{1-\frac{x}{1-\frac{2x}{1-\frac{2x}{1 - \frac{3x}{1-\frac{3x}{1-\cdots}}}}}}} \ \ (x \to 0^+).$$   Moreover, the second asymptotic expansion in Theorem \ref{maincontthm1} can be rewritten as
%$$\PP(e^{1/x}) \,  \sim \, \frac{x}{1-x-\frac{x^2}{1-3x-\frac{(2x)^2}{1-5x-\frac{(3x)^2}{1-7x-\frac{(4x)^2}{1-9x-\cdots}}}}} \ \ (x \to 0^+).$$

To be more explicit, let $w_n(x)$ denote the $n$th approximant of the continued fraction in statement (2) of the theorem (so that $w_0(x) = 0$ and $w_1(x) = \frac{1}{\log x -1}$), which by \cite[J.3]{stie} coincides with the $2n$th approximant of the continued fraction in statement (1).  Statement (2) of the theorem is to be interpreted as  the asymptotic expansion
\begin{align}\label{req1}
\PP(x) \simeq   \sum_{n = 1}^\infty (w_{n}(x)-w_{n-1}(x)) \ (x \to \infty).
\end{align}
But, by \cite[(42.9)]{wall} (or by (\ref{detformula})), one has $w_{n+1}(x) - w_n(x) \sim \frac{(n!)^2}{(\log x)^{2n+1}} \ (x \to \infty)$ for all $n \geq 0$.  This implies, by the definition of asymptotic expansions,  that (\ref{req1}) is equivalent to
\begin{align}\label{man}
\PP(x) - w_n(x) \sim \frac{(n!)^2}{(\log x)^{2n+1}} \ (x \to \infty), \quad \text{ for all } n \geq 0.
\end{align}
Thus, statement (2) of Theorem \ref{maincontthm1} is essentially equivalent to (\ref{man}).  Note that (\ref{man}) for $n = 0$ is the prime number theorem, and for $n = 1$ it is equivalent to (\ref{w1}).

Theorem \ref{maincontthm1} is proved in Section 3.1.   It is also proved in that section that the approximants $w_n(x)$ above are precisely the ``best''  approximations of $\PP(x)$ that are rational functions of $\log x$, or, in other words, the functions $w_n(e^x)$ are precisely the ``best'' rational function approximations of $\PP(e^x)$, in the following sense: the functions $w_n(e^x)$ are precisely those rational functions $w \in \RR(x)$ such that $v = w$ for every rational function $v \in \RR(x)$ of degree at most $\deg w$ such that $\PP(e^x) - v(x) = O(\PP(e^x)-w(x)) \ (x \to \infty)$.      Figure \ref{primeasym3} provides a graph of the function $\PP(e^x)$ and the approximants $w_n(e^x)$ for $n = 1,2,3,4$.     Note that the denominator  of the rational function $w_n(e^x)$ is the {\it $n$th monic Laguerre polynomial} \cite{lag}.

\begin{figure}[ht!]
\centering
\includegraphics[width=110mm]{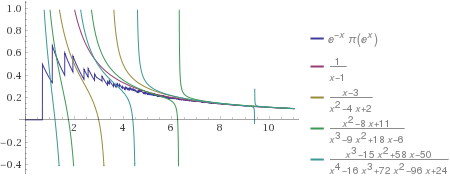}
\caption{The first four best nonconstant rational approximations of $\PP(e^x)$}\label{primeasym3}
\end{figure}

The notions of an asymptotic continued fraction expansion and a  best rational function approximation seem to be absent from the  existing literature,  and it was the discovery of  Theorem \ref{maincontthm1}   that motivated us to introduce them here.       Curiously, two more asymptotic continued fraction expansions of $\PP(x)$ and $A(x)$ follow  immediately from (\ref{asex2})  and the identity
\begin{align*}
\sum_{k = 0}^n k! z^{k+1} =  \frac{z}{1 \, -}  \ \frac{z} {1+z\, -} \ \frac{2z} {1+2z\, -} \   \frac{3z} {1+3z \, -} \ \frac{4z}{1+4z \, -} \  \cdots \ \frac{nz}{1+nz}, \quad n \geq 0.
\end{align*}
The identity can be verified easily by induction from the well-known recurrence relation  \cite[(1.4)]{wall}  for the numerator or denominator $C_n(z)$ of the  $n$th approximant  $u_n(z)$ of a continued fraction, which for the continued fraction
$$ \frac{z}{1 \, -}  \ \frac{z} {1+z\, -} \ \frac{2z} {1+2z\, -} \   \frac{3z} {1+3z \, -} \ \frac{4z}{1+4z \, -} \  \cdots \ = \  \cfrac{1}{\frac{1}{z} \, -}  \ \cfrac{\frac{1}{z}} {\frac{1}{z}+1\, -} \ \cfrac{\frac{2}{z}} {\frac{1}{z}+2\, -} \   \cfrac{\frac{3}{z}} {\frac{1}{z}+3 \, -} \ \cfrac{\frac{4}{z}}{\frac{1}{z}+4 \, -} \  \cdots$$
is the recurrence relation
$$C_{k+1}(z) = (1+kz)C_k(x) -(kz )C_{k-1}(z), \quad k \geq 1.$$   It follows that  $u_n(z) = \sum_{k = 0}^n \frac{k!}{(\log x)^{k+1}}$  for all $n$, where $z = \frac{1}{\log x}$.  From the definiton of an asymptotic continued fraction expansion, then, we obtain the four restatements of (\ref{asex2}) provided in the following proposition.

\begin{proposition}\label{gencornew}
One has the following asymptotic continued fraction expansions.
\begin{enumerate}
\item $\PP(x)\,  \simeq \,  \cfrac{\frac{1}{\log x}}{1 \, -}  \ \cfrac{\frac{1}{\log x}} {1+\frac{1}{\log x}\, -} \ \cfrac{\frac{2}{\log x}} {1+\frac{2 }{\log x}\, -} \   \cfrac{\frac{3}{\log x}} {1+\frac{3}{\log x} \, -} \ \cfrac{\frac{4}{\log x}}{1+\frac{4}{\log x} \, -}  \  \cdots \ (x \to \infty)$.
\item $\PP(x)  \,  \simeq \, \displaystyle  \frac{1}{\log x \, -}  \ \frac{1\log x} {\log x+1 \, -} \ \frac{2\log x} {\log x+2 \, -} \   \frac{3\log x} {\log x+3\, -} \ \frac{4\log x}{\log x+4 \, -}  \  \cdots \  (x \to \infty).$ 
\item $A(x) \, \simeq \,  \cfrac{\frac{1}{\log x}} {1+\frac{1}{\log x}\, -} \ \cfrac{\frac{2}{\log x}} {1+\frac{2 }{\log x}\, -} \   \cfrac{\frac{3}{\log x}} {1+\frac{3}{\log x} \, -} \ \cfrac{\frac{4}{\log x}}{1+\frac{4}{\log x} \, -}  \  \cdots \ (x \to \infty)$.
\item $A(x) \, \simeq \, \displaystyle  \frac{1\log x} {\log x+1 \, -} \ \frac{2\log x} {\log x+2 \, -} \   \frac{3\log x} {\log x+3\, -} \ \frac{4\log x}{\log x+4 \, -}  \  \cdots \  (x \to \infty)$.
\end{enumerate}
\end{proposition} 

 In Section 3.2, we prove the following generalization of Theorem \ref{maincontthm1}, along with an analogous generalization of  Proposition \ref{gencornew}.

\begin{theorem}\label{gentheorem}
For every nonnegative integer $n$ one has the asymptotic continued fraction expansions
$$p_n(x) \, \simeq \,  \cfrac{\frac{1}{\log x}}{1 \,-} \ \cfrac{\frac{1+n}{\log x}}{1 \,-}\  \cfrac{\frac{1}{\log x}}{1 \,-}\  \cfrac{\frac{2+n}{\log x}}{1 \,-}\  \cfrac{\frac{2}{\log x}}{1 \,-}\  \cfrac{\frac{3+n}{\log x}}{1 \,-} \  \cfrac{\frac{3}{\log x}}{1 \,-}\  \cfrac{\frac{4+n}{\log x}}{1 \,-}  \ \cfrac{\frac{4}{\log x}}{1 \,-}  \ \cdots \ (x \to \infty)$$
and
$$p_n(x)  \,  \simeq \, \frac{1}{\log x -1-n\,-} \  \frac{1(1+n)}{\log x - 3-n \,-}\  \frac{2(2+n)}{\log x-5-n \,-}\  \frac{3(3+n)}{\log x  - 7-n \,-}  \ \cdots \  (x \to \infty),$$
where  $p_n(x) = \frac{(\log x)^{n}}{n!} \left(\PP(x) - \sum_{k = 0}^{n-1} \frac{k!}{(\log x)^{k+1}}\right).$  Moreover, the same asymptotic continued fraction expansions hold for the function $l_n(x) = \frac{(\log x)^{n}}{n!} \left(\frac{\li(x)}{x} - \sum_{k = 0}^{n-1} \frac{k!}{(\log x)^{k+1}}\right).$
\end{theorem}

In Section  3.4, we provide an alternative proof of Theorem \ref{maincontthm1}  (resp., Theorem \ref{gentheorem}) from the measure-theoretic consequences of Stieltjes' theory of continued fractions  \cite{stie} applied to the  probability measure  $\gamma_0$ (resp., $\gamma_n$) on $[0,\infty)$ with density  function $e^{-t}$ (resp., $\frac{t^{n}}{n!}e^{-t}$), which is known as the {\bf exponential distribution with rate parameter $1$} (resp.,  {\bf gamma distribution with shape parameter $n+1$ and rate parameter $1$}).    The measure $\gamma_n$ is the unique Borel measure $\mu$ on $[0,\infty)$ whose Stieltjes transform $\int_{-\infty}^\infty \frac{d\mu(t)}{z-t}$ is equal to $ -e^{-z}E_{n+1}(-z)$ on $\CC\backslash[0,\infty)$, where $E_n(z) = z^{n-1}\int_{z}^\infty \frac{e^{-t}}{t^n} dt$ denotes the exponential integral function.   Our alternative proof  of Theorems \ref{maincontthm1} and Theorem \ref{gentheorem} explains the theorems' apparent connection to the well-known continued fraction expansions
\begin{align}
-e^{-z}E_{n+1}(-z) & =  \cfrac{\frac{1}{z}}{1 \,-} \  \cfrac{\frac{1+n}{z}}{1 \,-}\  \cfrac{\frac{1}{z}}{1 \,-}\  \cfrac{\frac{2+n}{z}}{1 \,-}\  \cfrac{\frac{2}{z}}{1 \,-}\  \cfrac{\frac{3+n}{z}}{1 \,-} \ \cfrac{\frac{3}{z}}{1 \,-}\  \cfrac{\frac{4+n}{z}}{1 \,-} \ \cfrac{\frac{4}{z}}{1 \,-} \ \cdots \label{Enexpansion} \\
  &=  \frac{1}{z -1-n\,-} \  \frac{1(1+n)}{z -3-n \,-}\  \frac{2(2+n)}{z-5-n \,-}\  \frac{3(3+n)}{z -  7-n \,-}  \ \cdots \label{Enexpansion2}
\end{align}
of $-e^{-z}E_{n+1}(-z)$ on $\CC\backslash [0,\infty)$, and to the fact that $l_n(e^x) =  \operatorname{Re}(-e^{-x}E_{n+1}(-x))$ for all $x \in (0, \infty)$.

\subsection{Outline of paper, and acknowledgments}  

Section 2  of this paper provides some background on asymptotic expansions (Section 2.1), introduces the notions of an {\it asymptotic continued fraction expansion} (Section 2.2) and a {\it best rational function approximation} (Section 2.3), and proves some general results about asymptotic continued fraction expansions  (Sections 2.3--2.5).  Sections 3.1 and 3.2 apply the results of Section 2 to the prime counting function, yielding, for example, Theorems \ref{maincontthm1} and \ref{gentheorem}.  Section 3.3 considers the meausure-theoretic aspects of Stieltes' theory of continued fractions  \cite{stie} and their consequences for asymptotic Stieltjes and Jacobi continued fraction expansions, and Section 3.4 provides an alternative proof of Theorems \ref{maincontthm1} and \ref{gentheorem} in this measure-theoretic context.  Section 4 relates the asymptotic expansion (\ref{asex2}) of the prime counting function to the harmonic numbers $H_n$ and to various combinatorially defined integer sequences (besides $\{n!\}$, $\{k_n\}$, and $\{I_n\}$).   Finally, Section 5 generalizes all of our results on $\pi(x)$ to any arithmetic semigroup satisfying a natural generalization of Axiom A  \cite{kno}, and thus in particular to any number field.  

%In Section 7 we provide some simple methods for deriving further asymptotic expansions of the prime counting function and related functions.  Many of these results are based on relationships between the generating functions of various combinatorially defined sequences to the generating function $\sum_{k = 0}^\infty n! X^n$ of the sequence $\{n!\}$.  Other results, for example, use the asymptotic $H_n -\gamma - \log n \sim \frac{1}{2n} \ (n \to \infty)$ to express the asymptotic expansions derived in Sections 3 and 5 in terms of the harmonic numbers.   At the most basic level, it is clear that the prime number theorem is equivalent to $$\pi(n) \sim \frac{n}{H_n} \ (x \to \infty),$$ where $\frac{n}{H_n}$ is also the harmonic mean of the integers $1,2,3,\ldots,n$.  We show in Section 7 that $\PP(e^\gamma n)$ has the asymptotic continued fraction expansion
%$$\PP(e^\gamma n) \sim \, \frac{\sfrac{1}{H_n}}{1 - \frac{\sfrac{1}{H_n}}{1-\frac{\sfrac{1}{H_n}}{1-\frac{\sfrac{2}{H_n}}{1-\frac{\sfrac{2}{H_n}}{1 - \frac{\sfrac{3}{H_n}}{1-\frac{\sfrac{3}{H_n}}{1-\cdots}}}}}}} \ \ (n \to \infty).$$ 

All graphs provided in the paper were made using the free online version of WolframAlpha.  I am grateful to Kevin McGown, Roger Roybal, and  Hendrik W.\ Lenstra, Jr., for providing comments on various early drafts of this paper, and also to the anonymous referee(s), whose suggestions  improved the accuracy, organization, and clarity of the paper.

\section{Asymptotic continued fraction expansions}

\subsection{Asymptotic expansions}

In this section we provide some background on the theory of asymptotic expansions. %To our knowledge, two results in this section, Lemmas  \ref{asympprop} and \ref{asympprop2a}, do not appear in the literature.

Let $a$ be a limit point of a topological space $\XX$.  (The reader may assume throughout this section  that $\XX$ is a subspace of $\RR \cup \{\pm \infty\}$ or $\CC \cup \{\infty\}$; however, the definitions and results easily generalize to any topological space $\XX$, e.g., to $\XX$ a subspace of $({\mathbb C }\cup \{\infty\})^n$ for any positive integer $n$, and the more general presentation serves to simplify  the discussion.)  Let $f$ and $g$ be complex-valued functions whose domains are arbitrary sets.  We write $$f(x) = O(g(x)) \ (x \to a)_\XX$$ if for some $M > 0$ there exists a punctured neighborhood $U \subseteq \XX$ of $a$ such that $|f(x)| \leq M |g(x)|$ for all $x \in U$.   We write $$f(x) = o(g(x)) \ (x \to a)_\XX$$ if for every $M > 0$ there exists a punctured neighborhood $U \subseteq \XX$ of $a$ such that $|f(x)| \leq M |g(x)|$ for all $x \in U$.   Also, we write $$f(x) \sim g(x) \ (x \to a)_\XX$$ if $f(x) - g(x) = o(g(x))\ (x \to a)_\XX$.

\begin{remark}\label{arem} \
\begin{enumerate}
\item Both of the conditions $f(x) = o( g(x)) \ (x \to a)_\XX$ and $f(x)  \sim g(x) \ (x \to a)_\XX$ are stronger than the condition $f(x) = O(g(x)) \ (x \to a)_\XX$, and all three require that $f$ and $g$ be defined on a punctured neighborhood of $a$.  The relation $\sim$ is an equivalence relation, the relations $O$ and $o$ are transitive and invariant  under $\sim$-equivalence, on the class of all functions defined in a punctured neighborhood of $a$.   If $f(x) = O( g(x)) \ (x \to a)_\XX$ and $g(x) = o(h(x)) \ (x \to a)_\XX$, or if  $f(x) = o( g(x)) \ (x \to a)_\XX$ and $g(x) = O(h(x)) \ (x \to a)_\XX$,  then $f(x) = o( h(x)) \ (x \to a)_\XX$.  A function $f$ is zero in a punctured neighborhood of $a$ if and only if $f(x) \sim 0 \ (x \to a)_\XX$.
\item If $g$ is nonzero in a punctured neighborhood of $a$, then one has
\begin{align*}
f(x) = O(g(x)) \ (x \to a)_\XX  & \ \Longleftrightarrow \    \limsup_{x \to a} \left|\frac{f(x)}{g(x)}\right| < \infty, \\
f(x) = o(g(x)) \ (x \to a)_\XX  & \ \Longleftrightarrow \     \lim_{x \to a} \frac{f(x)}{g(x)} = 0, \\
f(x) \sim g(x) \ (x \to a)_\XX  & \ \Longleftrightarrow \   \lim_{x \to a} \frac{f(x)}{g(x)} = 1.
\end{align*}
\end{enumerate}
\end{remark}

An  {\bf asymptotic sequence  over $\XX$ at $a$} is a sequence $\{\varphi_n\}_{n = 1}^\infty$  of complex-valued functions $\varphi_n$ such that $\varphi_{n+1}(x) = o(\varphi_n(x)) \ (x \to a)_\XX$ for all $n \geq 1$.  Let $\{\varphi_n\}$ be an asymptotic sequence over $\XX$ at $a$, let $f$ be a complex-valued function, let $\{a_n\}$ be a sequence of complex numbers, and $N$ a positive integer.  The function $f$ is said to have an {\bf asymptotic expansion
\begin{align}\label{asympa}
f(x) \simeq \sum_{n=1}^{N} a_n \varphi_{n}(x) \ (x \to a)_\XX
\end{align}
of order $N$ over $\XX$  at $a$ with respect to $\{\varphi_n\}$} if
\begin{align}\label{asympb}
f(x) - \sum_{k=1}^{n} a_k \varphi_{k}(x) = o(\varphi_{n}(x)) \  (x \to a)_\XX
\end{align}
for all positive integers $n \leq N$ (or equivalently for $n = N$, given Remark \ref{asrem}(1) below).  The function $f$ is said to have an {\bf asymptotic expansion
\begin{align}\label{asympc}f(x) \simeq \sum_{n=1}^{\infty} a_n \varphi_{n}(x) \ (x \to a)_\XX
\end{align}
of order $\infty$ over $\XX$ at $a$ with respect to $\{\varphi_n\}$} if (\ref{asympb}) holds for all positive integers $n$.  In all of the definitions above, we replace ``$(x \to a)_\XX$'' with ``$(x \to a)$'' when $\XX$ is $\RR \cup \{\pm \infty\}$ or $\CC \cup \{\infty \}$.   Often we also replace ``$(x \to a)_\XX$'' with ``$(x \to a)_{\XX\backslash \{a\}}$'' when $a = \pm \infty$.

\begin{remark}\label{asrem} \
\begin{enumerate}
\item It is clear that, for a given positive integer $n$, condition (\ref{asympb}) implies that
\begin{align}\label{asymp3}
f(x) - \sum_{k=1}^{n-1} a_k \varphi_{k}(x) = O(\varphi_{n}(x)) \  (x \to a)_\XX,
\end{align}
which in turn implies that $f(x) - \sum_{k=1}^{n-1} a_k \varphi_{k}(x) = o(\varphi_{n-1}(x)) \  (x \to a)_\XX$
if $n \geq 2$.   Consequently, the asymptotic expansion (\ref{asympc}) holds if and only if either (\ref{asympb}) or (\ref{asymp3}) holds for infinitely many integers $n \geq 1$, if and only if (\ref{asymp3}) holds for all $n \geq 1$.
\item  For any positive integer $n$,  if $a_n \neq 0$, then (\ref{asympb}) is equivalent to
$$f(x) - \sum _{k=1}^{n-1}a_{k}\varphi _{k}(x) \sim a_n \varphi_n(x) \ (x \to a)_\XX.$$
%\item A common  special case of asymptotic expansions is when each of the functions $\varphi_n$ in the asymptotic sequence $\{\varphi_n\}$ is nonzero in a punctured neighborhood of $a$.   In this case,  (\ref{asympb}), by Remark  \ref{arem}(2),  is equivalent  for any positive integer $n$ to
%\begin{align}\label{asymp2b}
%{\displaystyle a_{n}=\lim _{x\to a}{\frac {f(x)-\sum _{k=1}^{n-1}a_{k}\varphi _{k}(x)}{\varphi _{n}(x)}}}.
%\end{align}
%Consequently, in this case, an asymptotic expansion of order $N$ of some function $f$ holds if and only if the limit $a_n$ in (\ref{asymp2b}) exists for all $n \leq N$, in which case the asymptotic expansion (\ref{asympa}) holds with coefficients $a_n$.  Thus, in this case,  the  coefficients in any asymptotic expansion of a function $f$ are uniquely determined by the function $f$ and the asymptotic sequence $\{\varphi_n\}$.
%\item If $\{\varphi_n\}$ is an aymptotic sequence over $\XX$ at $a$ and $\psi_n(x) = O(\varphi_n(x)) \ (x \to a)_\XX$  and $\varphi_n(x) = O(\psi_n(x)) \ (x \to a)_\XX$, then $\{\psi_n\}$ is also an aymptotic sequence over $\XX$ at $a$.
\end{enumerate}
\end{remark}

%Over $\RR$ one may also consider asymptotic expansions at $a^+$ (to the right of $a$) and at $a^-$ (to the left of $a$).  Note that an asymptotic expansion $f(x) \sim \sum_{n = 1}^N a_n \varphi_n(x) \ (x \to a^+)$ is equivalent to $f(a+x^2) \sim \sum_{n = 1}^N a_n \varphi_n(a+ x^2) \ (x \to 0)$, and  an asymptotic expansion  $f(x) \sim \sum_{n = 1}^N a_n \varphi_n(x) \ (x \to a^-)$ is equivalent to $f(a-x^2) \sim \sum_{n = 1}^N a_n \varphi_n(a-x^2) \ (x \to 0)$.

\begin{example}   Important examples of asymptotic expansions over $\RR$ and $\CC$ follow from Taylor's theorem: if $f$ is real or complex function defined in a neighborhood of some number $a$, then one has an asymptotic expansion  $f(x) \simeq \sum_{n = 0}^N a_n (x-a)^n \ (x \to a)$
of $f$  at $a$ with respect to the asymptotic sequence $\{(x-a)^n\}$ if and only if $f$ is $N$ times differentiable at $a$, in which case  $a_n = \frac{f^{(n)}(a)}{n!}$ for all $n \leq N$.   This also applies  to $f$ over $\CC$ at  $\infty$ (resp., over $\RR$ at $\infty$, over $\RR$ at $-\infty$) by considering the function $f(1/x)$ with respect to the asymptotic sequence $\left\{ \frac{1}{x^k}\right\}$ at $0$ (resp., at $0^+$, at $0^-$).    As a consequence, two asymptotic expansions at $a$ with respect to  $\{(x-a)^n\}$ or at  $\infty$ with respect to $\left\{ \frac{1}{x^n}\right\}$ can be added, subtracted, multiplied, divided, and composed just like formal power series. % Thus, for example, if a function $f(x)$ has an asymptotic expansion
%$$f(x) \sim \sum_{n = 0}^\infty \frac{a_n}{x^n} \ (x \to \infty)$$
%at $\infty$,  where $a_0 \neq 0$, then one also has the ``reciprocal'' asymptotic expansion
%$$\frac{1}{f(x)} \sim \sum_{n = 0}^\infty \frac{b_n}{x^n} \ (x \to \infty),$$
 %where $\sum_{n = 0}^\infty b_n X^n  =\left(\sum_{n = 0}^\infty a_n X^n\right)^{-1}$ as formal power series.   
%Thus, an asymptotic expansion coming from a Taylor expansion can treated formally as the generating function of its sequence of coefficients.  
This is utilized throughout Section 4.2 and justifies our generating function derivation of (\ref{expans2}).
\end{example}

References for the material above are \cite[Chapter 1]{erd} and \cite[Chapter 1]{est}.  However, most references on asymptotic expansions, including \cite{erd} and \cite{est},  require that there be a punctured neighborhood of $a$ on which all of the functions in a given asymptotic sequence are defined, or are even nonzero, while our definition only requires that each function individually is defined on a punctured neighborhood of $a$.   By Remark \ref{Laguerre}(1),  those stronger assumptions are too restrictive for the statement of Theorem \ref{maincontthm1} and for our definition  of an asymptotic continued fraction expansion given in the next section.

The following lemmas provide, respectively, a necessary and sufficient condition for two functions to have the same asymptotic expansion with respect to a given asymptotic sequence, and a natural condition under which two asymptotic sequences can be viewed as ``equivalent.''   

\begin{lemma}\label{asympprop}
Let $a$ be a limit point of a topological space $\XX$, let $\{\varphi_n\}$ be an asymptotic sequence over $\XX$ at $a$, let $N$ be a positive integer, and let $f$ and $g$ be complex-valued functions.  A given asymptotic expansion of $f$ of order $N$ over $\XX$ at $a$  with respect to $\{\varphi_n\}$  is also an asymptotic expansion of  $g$ of order $N$ over $\XX$ at $a$ with respect to $\{\varphi_n\}$ if and only if $f(x)-g(x) = o(\varphi_N(x)) \ (x \to a)_\XX.$
\end{lemma}

\begin{proof}
Suppose that $f(x) \simeq \sum_{n = 1}^N a_n \varphi_n(x) \ (x \to a)_\XX$ is an asymptotic expansion of $f$ of order $N$ over $\XX$  at $a$, or equivalently, that $f(x)-  \sum_{n = 1}^N a_n \varphi_n(x) = o(\varphi_N(x)) \ (x \to a)_\XX.$
If one has
$g(x)-  \sum_{n = 1}^N a_n \varphi_n(x) = o(\varphi_N(x)) \ (x \to a)_\XX,$
then subtracting we see that
$f(x)- g(x) = o(\varphi_N(x)) \ (x \to a)_\XX.$
The converse is also clear.  The lemma follows.
\end{proof}

\begin{lemma}\label{asympprop2a}
Let $a$ be a limit point of a topological space $\XX$, let $\{\varphi_n\}$ be an asymptotic sequence over $\XX$ at $a$, and let $\{\psi_n\}$ be  a sequence of complex-valued functions such that $\psi_n(x) -\varphi_n(x) = o(\varphi_N(x)) \ (x \to a)_\XX$ for all positive integers $n$ and $N$ with $n \leq N$.   Then $\{\psi_n\}$ is an asymptotic sequence over $\XX$ at $a$ with  $\varphi_n(x) -\psi_n(x) = o(\psi_N(x)) \ (x \to a)_\XX$ for all  positive integers $n$ and $N$ with $n \leq N$.  Moreover, any asymptotic expansion 
$f(x) \simeq \sum_{n = 1}^N a_n \varphi_n(x) \ (x \to a)_\XX$
of a complex-valued function $f$ over $\XX$ at $a$ with respect to $\{\varphi_n\}$ is equivalent to the asymptotic expansion $f(x) \simeq \sum_{n = 1}^N a_n \psi_n(x) \ (x \to a)_\XX$
of $f$ over $\XX$ at $a$ with respect to $\{\psi_n\}$.
\end{lemma}

\begin{proof}
Without loss of generality we may assume that $N$ is finite.  
For all $n$ one has $\psi_n(x) -\varphi_n(x) = o(\varphi_n(x)) \ (x \to a)_\XX$ and therefore $\psi_n(x) \sim \varphi_n(x) \ (x \to a)_\XX.$   It follows that $\varphi_n(x) -\psi_n(x) = o(\varphi_N(x)) = o(\psi_N(x)) \ (x \to a)_\XX$ for all $n \leq N$ and that $\{\psi_n\}$ is an asymptotic sequence over $\XX$ at $a$.  If the asymptotic expansion $f(x) \simeq \sum_{n = 1}^N a_n \varphi_n(x) \ (x \to a)_\XX$ holds, then
\begin{align*} f(x) - \sum_{n = 1}^N a_n \psi_n(x)  & = \left( f(x) - \sum_{n = 1}^N a_n \varphi_n(x) \right) +  \sum_{n = 1}^N a_n (\psi_n(x)-\varphi_n(x)) \\
  &=  o( \varphi_N(x))  \ (x \to a)_\XX \\ 
  &=  o( \psi_N(x))  \ (x \to a)_\XX,
\end{align*}
and therefore the asymptotic expansion $f(x) \simeq \sum_{n = 1}^N a_n \psi_n(x) \ (x \to a)_\XX$ also holds.  By symmetry, the reverse implication holds as well.  The lemma follows.
\end{proof}

\subsection{Asymptotic continued fraction expansions}

In this section we introduce the notion of an asymptotic continued fraction expansion.

Let $a$ be a limit point of a topological space $\XX$.  Let $f$, $b_0, b_1, b_2, \ldots$, and $a_1, a_2, \ldots$ be complex-valued functions, each of which individually has its domain containing a punctured neighborhood of $a$.   Consider the (formal) continued fraction
\begin{align*}
b_0(x) + \frac{a_1(x)}{b_1(x) \, +} \ \frac{a_2(x)}{b_2(x)\, +} \ \frac{a_3(x)}{b_3(x)\, +}  \ \cdots \ = \ b_0(x) +\cfrac{a_1(x)}{b_1(x)+\cfrac{a_2(x)}{b_2(x)+\cfrac{a_3(x)}{b_3(x)+\cdots}}}.
\end{align*}
Let 
\begin{align*}
w_n(x) =  b_0(x) + \frac{a_1(x)}{b_1(x)\, +} \ \frac{a_2(x)}{b_2(x)\, +} \ \cdots \ \frac{a_n(x)}{b_n(x)} \  = \ b_0(x) +\cfrac{a_1(x)}{b_1(x)+\cfrac{a_2(x)}{b_2(x)+\cdots +\cfrac{a_{n-1}(x)}{b_{n-1}(x)+\cfrac{a_{n}(x)}{b_{n}(x)}}}}
\end{align*} for all $n \geq 0$ denote the {\bf $n$th approximant} of the given continued fraction, where $w_0(x) = b_0(x)$ (and where the domain of $w_n(x)$ is the largest possible so that the expression above is defined).  We write 
\begin{align}\label{contlega} 
f(x) \, \simeq \, b_0(x) + \frac{a_1(x)}{b_1(x) \,+} \ \frac{a_2(x)}{b_2(x)\,+} \ \frac{a_3(x)}{b_3(x)\,+} \ \cdots  \ (x \to a)_\XX
\end{align}
if  $\{w_n(x)-w_{n-1}(x)\}_{n = 1}^\infty$ is an asymptotic sequence over $\XX$ at $a$  and 
$$f(x) -w_0(x) \simeq \sum_{n = 1}^\infty (w_n(x)-w_{n-1}(x))  \ (x \to a)_\XX$$
is an  asymptotic expansion of $f(x)$ over $\XX$ with respect to $\{w_n(x)-w_{n-1}(x)\}$.  In that case we say that (\ref{contlega}) is an {\bf asymptotic continued fraction expansion of $f$ over $\XX$ at $a$.}   Thus, (\ref{contlega}) is an asymptotic continued fraction expansion of $f$ over $\XX$ at $a$ if and only if 
\begin{align}\label{contas1}
w_{n+1}(x)-w_{n}(x) = o(w_n(x)-w_{n-1}(x)) \ (x \to a)_\XX
\end{align}
and
\begin{align}\label{contas2}
f(x) - w_{n}(x) = o( w_{n}(x)-w_{n-1}(x)) \ (x \to a)_\XX
\end{align}
for all $n \geq 1$. 

  % Contrast this asymptotic condition with the  condition
%\begin{align*}
%f(x) = b_0(x) + \frac{a_1(x)}{b_1(x)\,+} \ \frac{a_2(x)}{b_2(x)\,+} \ \frac{a_3(x)}{b_3(x)\,+} \ \cdots, \quad x \in \XX
%\end{align*}
%of pointwise convergence on $\XX$,  which by definition holds if $f(x) = \lim_{n \to \infty} w_n(x)$ for all $x \in \XX$ (which holds if and only if $f(x)-w_0(x) = \sum_{n = 1}^\infty (w_n(x)-w_{n-1}(x))$ for all $x \in \XX$ provided that all of the approximants $w_n(x)$ are defined on all of $\XX$).  In general neither of the two conditions implies the other.

\begin{remark}\label{crem} \
\begin{enumerate}
\item By Remark \ref{asrem}, condition (\ref{contas2})  can be replaced with
\begin{align}\label{contas3}
f(x) - w_{n}(x)  = O( w_{n+1}(x)-w_{n}(x)) \ (x \to a)_\XX
\end{align}
and with
\begin{align}\label{contas4}
f(x) - w_{n}(x) \sim w_{n+1}(x)-w_{n}(x) \ (x \to a)_\XX.
\end{align}
More precisely, the asymptotic continued fraction  expansion (\ref{contlega}) holds if and only if (\ref{contas1}) holds for all $n \geq 1$ and any one of the three conditions (\ref{contas2}), (\ref{contas3}), (\ref{contas4}) holds for infinitely many $n \geq 1$, in which case all three of the conditions hold for all  $n \geq 1$.
\item $\{w_{n}(x)-w_{n-1}(x)\}$ is an asymptotic sequence over $\XX$ at $a$ if and only if $w_{l}(x)-w_n(x) \sim w_m(x)-w_n(x) \ (x \to a)_\XX$ for all $n,m,l  \geq 0$ with $m > n$ and $l > n$, or equivalently with $m = n+1$ and $l = n+2$.
\item The asymptotic continued fraction expansion (\ref{contlega}) holds if and only if 
$f(x) - w_n(x) \sim w_{m}(x)-w_n(x)  \ (x \to a)_\XX$
 for all $n,m  \geq 0$ with $m > n$, or equivalently with $n < m \leq n+2$.
\item  If the asymptotic continued fraction expansion (\ref{contlega}) holds, then
$$f(x) \sim b_0(x) \ (x \to a)_\XX \ \Longleftrightarrow\  w_1(x) \sim b_0(x) \ (x \to a)_\XX \ \Longleftrightarrow\ f(x) \sim w_n(x) \ (x \to a)_\XX \text{ for all } n.$$
\item If $f(x)-b_0(x)$ and $a_1(x)$ are nonzero in a neighborhood of $a$, then the asymptotic continued fraction  expansion (\ref{contlega}) holds if and only if $\frac{a_1(x)}{f(x) -b_0(x)}  \sim  b_1(x) \ (x \to \infty)_\XX$ and one has the asymptotic  continued fraction expansion
\begin{align*}
\frac{a_1(x)}{f(x) -b_0(x)} \, \simeq \,  b_1(x)+ \ \frac{a_2(x)}{b_2(x)\,+} \ \frac{a_3(x)}{b_3(x)\,+} \ \cdots  \ (x \to a)_\XX.
\end{align*}

%(Proof: The $n$th convergent $w_n'$ of the continued fraction above is $\frac{a_1}{w_{n+1}-w_0}$, and thus $w_n'- w_{n-1}' = \frac{a_1(w_n-w_{n+1})}{(w_{n+1}-w_0)(w_n-w_0)} \sim -\frac{a_1}{(f-w_0)^2}(w_{n+1}-w_n)$ and therefore $\{w_n'-w_{n-1}'\}$ is an asymptotic sequence, while also $\frac{a_1}{f -b_0}-w_n' =  \frac{a_1(w_{n+1}-f)}{(f-w_0)(w_{n+1}-w_0)} \sim -\frac{a_1}{(f-w_0)^2}(w_{n+2}-w_{n+1}) \sim w_{n+1}'-w_n'$.)
\end{enumerate}
\end{remark}

%while if  $f(x)-w_{n-1}(x)$ is nonzero in a punctured neighborhood of $a$, then (\ref{contas2}) can be replaced with 
%\begin{align*}
%f(x) - w_{n}(x) = o(f(x) - w_{n-1}(x)) \ (x \to a)_\XX.
%t\end{align*}

For ease of notation one makes the identification
\begin{align*}
b_0(x) \pm \frac{a_1(x)}{b_1(x) \, -} \ \frac{a_2(x)}{b_2(x)\, -} \ \frac{a_3(x)}{b_3(x)\, -}  \ \cdots \ = \
b_0(x)+ \frac{\pm a_1(x)}{b_1(x) \, +} \ \frac{-a_2(x)}{b_2(x)\, +} \ \frac{-a_3(x)}{b_3(x)\, +}  \ \cdots,
\end{align*}
and one identifies two formal continued fractions if they possess identical approximants.

The field $\CC((z)) = \CC[[z]][1/z]$ of formal Laurent series in $z$ is a complete normed field in the topology induced by the $(z)$-adic topology on the DVR $\CC[[z]]$.  We say that a formal continued fraction
$$G(z) = b_0(z)+  \frac{a_1(z)}{b_1(z) \, +} \  \frac{a_2(z)}{b_2(z) \,+} \  \frac{a_3(z)}{b_3(z) \,+} \ \cdots$$
with $a_n(z),b_n(z) \in \CC((z))$ for all $n$ {\bf converges formally} to  $F(z) \in \CC((z))$ if $F(z) = \lim_{n \to \infty} w_n(z)$ in the normed field $\CC((z))$, or equivalently if $F(z) = w_0(z)+ \sum_{n = 1}^\infty( w_n(z)-w_{n-1}(z))$ in the normed field $\CC((z))$, where $w_n(z)$ is the $n$th approximant of $G(z)$.  For further details on formal convergence, see \cite[Chapter V]{lor}.  Note that formal convergence is a special case of the more general notion of the convergence of a continued fraction over a normed field---and all of the other notions defined in Sections 2.1 and 2.2 can likewise be so generalized.

\subsection{Jacobi continued fractions and best rational approximations}

In this section  we prove some general results about asymptotic Jacobi continued fraction expansions.

The {\bf degree} $\deg w$ of a rational function $w \in K(X)$ over a field $K$ is equal to the maximum of the degree of the numerator and  the degree of denominator of $w$ when $w$ is written as a quotient of two relatively prime polynomials in $K[X]$.  Let $f$ be a complex function, and let $\XX$ be an unbounded subset of $\CC$.  We say that a rational  function $w \in \CC(z)$ is a {\bf best rational approximation of $f(z)$ over $\XX$ (at $\infty$)}  if $w$ is the unique rational function $v \in \CC(z)$  of degree at most $\deg w$  such that $f(z)-v(z) = O(f(z)-w(z)) \ (z \to \infty)_\XX$.    

%\begin{remark}
%In the definition above of best rational approximations, one should use the degree of the denominator, rather than the maximum of the degrees of the numerator and denominator, in order to be in more precise analogy with best rational approximations of real numbers.  However, in this paper we are interested only in functions whose best rational approximations all have the degree of the numerator less than the degree of the denominator (which is analogous to the restriction that a given real number lie in the interval $(0,1)$). Thus the definition we have made is sufficient for our purposes.  
%\end{remark}

 A {\bf Jacobi continued fraction}, or {\bf J-fraction}, is a continued fraction of the form
\begin{align*}
 \frac{a_1}{z+b_1  \,-} \  \frac{a_2}{z+b_2 \,-} \  \frac{a_3}{z+b_3 \,-}\ \cdots,
\end{align*}
where $a_n, b_n \in \CC$ and $a_n \neq 0$ for all $n$. The following theorem characterizes asymptotic Jacobi continued fraction expansions and shows that, if a given complex function has an asymptotic Jacobi continued fraction expansion, then the best rational approximations of the function are precisely the approximants of the continued fraction. 

\begin{theorem}\label{wsimJ}
Let $\{a_n\}$ and $\{b_n\}$ be sequences of complex numbers with $a_n \neq 0$ for all $n$, and for all nonnegative integers $n$ let $w_n(z)$ denote the $n$th approximant of the Jacobi continued fraction
\begin{align*}
\frac{a_1}{z+b_1 \,-} \  \frac{a_2}{z+b_2 \,-} \  \frac{a_3}{z+b_3 \,-} \ \cdots.
\end{align*}
One has the following.
\begin{enumerate}
\item $w_n(z)$ is a rational function of degree $n$  with $w_n(z) \sim \frac{a_1}{z}  \ (z \to \infty)$ and $$w_n(z)-w_{n-1}(z) \sim \frac{a_1 a_2 \cdots a_n}{z^{2n-1}}  \ (z \to \infty)$$ for all $n \geq 1$.  In particular, $\{w_n(z)-w_{n-1}(z)\}_{n = 1}^\infty$ is an asymptotic sequence at $\infty$.  Moreover, the given continued fraction converges formally to a series  $\sum_{k = 1}^\infty \frac{c_k}{z^k}$ in $(1/z)\CC[[1/z]]$.
\item  Let $f(z)$ be a complex function, and let $\XX$ be an unbounded subset of $\CC$.  The following conditions are equivalent.
\begin{enumerate}
\item $f(z)$ has the asymptotic expansion  $f(z) \simeq \sum_{k= 1}^\infty \frac{c_k}{z^k}\ (z \to \infty)_{\XX}$.
\item $f(z)$  has the asymptotic continued fraction expansion
\begin{align*}
f(z) \, \simeq \, \frac{a_1}{z+b_1 \,-} \  \frac{a_2}{z+b_2 \,-} \  \frac{a_3}{z+b_3 \,-} \ \cdots  \ (z \to \infty)_\XX.
\end{align*}
\item $f(z) - w_n(z) \sim  \frac{a_1 a_2 \cdots a_{n+1}}{z^{2n+1}} \ (z \to \infty)_\XX$ for all  nonnegative integers $n$.
\item $f(z) - w_n(z) = O\left(\frac{1}{z^{2n+1}} \right) \ (z \to \infty)_\XX$ for infinitely many (or all) nonnegative integers $n$.
\item $w_n(z)$ for every nonnegative integer $n$ is the unique rational function  $w(z) \in \CC(z)$ of degree at most $n$ such that $f(z) - w(z) = O\left(\frac{1}{z^{2n+1}} \right) \ (z \to \infty)_\XX$.  %(Equivalently, $w_n(z)$ for every positive integer $n$ is the unique Pad\'e approximant of $f(z)$ over $\XX$  at $\infty$ of order $[n-1,n]$.) 
\item For every positive integer $n$ one has $f_n(z) \sim \frac{a_{n+1}}{z} \ (z \to \infty)_\XX$, where $f_n(z)$ is the function defined by the equation
\begin{align*}
f(z) =  \frac{a_1}{z+b_1 \,-} \  \frac{a_2}{z+b_2 \,-} \  \frac{a_3}{z+b_3 \,-} \ \cdots  \  \frac{a_{n-1}}{z+b_{n-1} \,-} \  \frac{a_n}{z+b_n - f_n(z)},
\end{align*}
or equivalently by the recurrence relation $f_{n+1}(z) = z+b_{n+1}-\frac{a_{n+1}}{f_n(z)}$, where $f_0(z) = f(z)$.
\item For every positive integer $n$, and for some (or all) $d_n \in \CC$, the function 
\begin{align*}
f(z) -  \frac{a_1}{z+b_1 \,-} \  \frac{a_2}{z+b_2 \,-} \  \frac{a_3}{z+b_3 \,-} \ \cdots \   \frac{a_{n-1}}{z+b_{n-1}\, -} \  \frac{a_n}{z +b_n+d_n}
\end{align*}
is asymptotic over $\XX$ at $\infty$ to $\frac{a_1 a_2 \cdots a_n}{z^{2n}}d_n$ if $d_n \neq 0$, or to $\frac{a_1a_2\cdots a_{n+1}}{z^{2n+1}}$ if $d_n = 0$.
\end{enumerate}
\item If the equivalent conditions of statement (2) hold, then the best rational approximations of $f(z)$ over $\XX$ are precisely the approximants $w_n(z)$ for $n \geq 0$.
\end{enumerate}
\end{theorem}

\begin{proof} 
The (appropriately normalized) numerator $A_n = A_n(z)$ and denominator $B_n = B_n(z)$ of $w_n(z) = \frac{ A_n(z)}{B_n(z)}$ are uniquely determined by the well-known recurrence relations \cite[(1.4)]{wall} (or \cite[(1.3.1a) and (1.3.1b)]{cuyt}) and therefore by induction are polynomials of degree $n-1$ and $n$, respectively, with $B_n$ monic and $A_n$ having leading coefficient $a_1$.    Moreover, by \cite[(42.9)]{wall} (which follows from the well-known determinant formula \cite[(1.5)]{wall} \cite[(1.3.4)]{cuyt}), one has 
\begin{align}\label{detformula}
w_n(z)-w_{n-1}(z) = \frac{a_1 a_2 \cdots a_n}{B_n(z)B_{n-1}(z)},
\end{align}
and therefore $w_n(z)-w_{n-1}(z) \sim \frac{a_1 a_2 \cdots a_n}{z^{2n-1}}  \ (z \to \infty)$.
It also follows from (\ref{detformula}) that
\begin{align}\label{Jfrac}
w_{n}(z)-w_{n-1}(z)  = \frac{a_1 a_2 \cdots a_n}{z^{2n-1}} F_n(z)
\end{align}
for some $F_n(z) \in \CC[[1/z]]$ with constant term $1$.  From  this it follows that the $w_n(z)$ converge $(1/z)$-adically to a series 
$\sum_{k = 1}^\infty \frac{c_k}{z^k} = \sum_{n = 1}^\infty (w_n(z)-w_{n-1}(z))$ in $(1/z)\CC[[1/z]]$.  Statement (1) follows.

By statement (1) and Remark \ref{crem}(1), statements (2)(b)--(d) are equivalent.  We verify the equivalence of (2)(a) and (2)(d) as follows. 
By (\ref{Jfrac}), one has
$$\sum_{k= 1}^\infty \frac{c_k}{z^k} - w_n(z)  = \sum_{k = n+1}^\infty (w_k(z)-w_{k-1}(z)) = \frac{G_n(z)}{z^{2n+1}}$$
for some $G_n(z) \in \CC[[1/z]]$ (with constant term $a_1 a_2 \cdots a_{n+1}$).  It follows that
$$\sum_{k = 1}^{2n} \frac{c_k}{z^k} - w_n(z)  = \frac{H_n(z)}{z^{2n+1}}$$
for some $H_n(z) \in  \CC[[1/z]]$.  Since then $H_n(z)$ is also a rational function of $1/z$, it is analytic at $\infty$,  and thus
$$\sum_{k = 1}^{2n} \frac{c_k}{z^k} - w_n(z)  = O\left( \frac{1}{z^{2n+1}} \right) \ (z \to \infty).$$  Therefore, one has
$$f(z) - \sum_{k = 1}^{2n} \frac{c_k}{z^k}  = O\left( \frac{1}{z^{2n+1}} \right) \ (z \to \infty)_{
\XX}, \quad \text{ for all } n \geq 0$$
if and only if 
$$f(z) - w_n(z)  = O\left( \frac{1}{z^{2n+1}} \right) \ (z \to \infty)_{
\XX}, \quad \text{ for all } n \geq 0.$$
This proves that statements (2)(a)--(d) are equivalent.  

Alternatively, the equivalence of statements (2)(a)--(d) follows from Theorem \ref{wsimG}, which is stated and proved in Section 2.5.  From that theorem, it also follows that  (2)(f) and (2)(g) are both equivalent to (2)(a)--(d).  Rather than repeat the proof here, we defer the proof of those two equivalences until Section 2.5.  To finish the proof of statement (2), then, we show that (2)(d) and (2)(e) are equivalent.  Clearly (2)(e) implies (2)(d).  To prove the converse,  suppose that condition (2)(d) holds (for all $n$).  Let $v(z)$ be any rational function of degree at most $n$ such that $f(z) - v(z) = O\left(\frac{1}{z^{2n+1}} \right) \ (z \to \infty)_\XX$.  Then by (2)(c) one also has $$w_n(z) - v(z) = (f(z)-v(z)) - (f(z)-w_n(z)) =  O\left(\frac{1}{z^{2n+1}} \right) \ (z \to \infty)_\XX.$$  But since $w_n(z)- v(z)$ is a rational function of degree at most $\deg w_n(z)+ \deg v(z) \leq 2n < 2n+1$, it follows that $w_n(z) - v(z) = 0$.  Thus (2)(d) and (2)(e) are equivalent.

A similar argument shows that condition (2)(d) implies that $w_n(z)$ is a best rational approximation of $f(z)$ over $\XX$:  if $v(z)$ is any rational function of degree at most $n$ such that $f(z) - v(z) = O(f(z)-w_n(z)) \ (z \to \infty)_\XX$, then (2)(d) implies that $f(z) - v(z) =  O\left(\frac{1}{z^{2n+1}} \right) \ (z \to \infty)_\XX$, whence again we conclude that $v(z) = w_n(z)$.   To complete the proof of statement (3),  suppose that conditions (2)(a)--(g) hold, and let $v(z)$ be any best rational approximation of $f(z)$ over $\XX$ of degree $n = \deg w_n(z)$.  It remains only to show that $v(z) = w_n(z)$.  Since  both $ w_n(z)-v(z)$  and $\frac{1}{z^{2n+1}}$ are rational functions, one has either 
\begin{align}\label{hyp}
w_n(z)-v(z) =  O\left(\frac{1}{z^{2n+1}} \right)  \ (z \to \infty)
\end{align}
or
\begin{align}\label{hyp2}
\frac{1}{z^{2n+1}} =  o(w_n(z)-v(z))  \ (z \to \infty).
\end{align}
We show that (\ref{hyp2}) is impossible.  Suppose to obtain a contradiction that (\ref{hyp2}) holds, so, in particular, $w_n(z) \neq v(z)$.  By statement (2)(c) and (\ref{hyp2})  one has
$$f(z)-w_n(z) \sim \frac{a_1 a_2 \cdots a_{n+1}}{z^{2n+1}} =  o(w_n(z)-v(z)) \ (z \to \infty)_\XX$$
and therefore
$$f(z)-v(z)  \sim w_n(z)-v(z) \ (z \to \infty)_\XX,$$
so that also
$$f(z)-w_n(z) =  o(f(z)-v(z)) \ (z \to \infty)_\XX.$$
But then $w_n(z) = v(z)$ since $v(z)$ is a best rational approximation of $f(z)$, which is our desired contradiction.    Therefore (\ref{hyp}) must hold, so by (2)(d) one has
$$f(z)-v(z) = (f(z)-w_n(z))+ (w_n(z)-v(z)) =  O\left(\frac{1}{z^{2n+1}} \right) \ (z \to \infty)_\XX.$$  
Therefore,  by (2)(e), one has $v(z) = w_n(z)$, as desired.
\end{proof}

\subsection{Stieltjes continued fractions}

Continued fractions of the form given in the following result, which is more or less a corollary of Theorem \ref{wsimJ}, were introduced by Stieltjes in \cite[J.3]{stie} and are called {\bf Stieltjes continued fractions}, or {\bf S-fractions}.

\begin{corollary}\label{wsim}
Let $\{a_n\}$ be a sequence of nonzero complex numbers, and for all nonnegative integers $n$ let  $w_n(z)$ denote the $n$th approximant of the Stieltjes continued fraction
\begin{align*}
 \cfrac{\frac{a_1}{z}} {1 \, -} \ \cfrac{\frac{a_2}{z}} {1 \, -}  \ \cfrac{\frac{a_3}{z}} {1 \, -} \ \cfrac{\frac{a_4}{z}} {1 \, -} \  \cdots \ = \  \frac{a_1} {z \, -} \ \frac{a_2} {1 \, -}  \ \frac{a_3} {z \, -} \ \frac{a_4} {1 \, -} \  \cdots.
\end{align*}
One has the following.
\begin{enumerate}
\item $w_n(z)$ is a rational function of degree ${\left\lfloor \frac{n+1}{2}\right\rfloor}$  with $w_n(z) \sim \frac{a_1}{z}  \ (z \to \infty)$ and $$w_n(z)-w_{n-1}(z) \sim \frac{a_1 a_2 \cdots a_n}{z^{n}}  \ (z \to \infty)$$ for all $n \geq 1$.   In particular, $\{w_n(z)-w_{n-1}(z)\}_{n = 1}^\infty$ is an asymptotic sequence at $\infty$.  Moreover, the $2n$th approximant $w_{2n}(z)$ of the Stieltjes continued fraction above coincides with the $n$th approximant of the Jacobi continued fraction
\begin{align*}
\frac{a_1}{z-a_2 \,-} \  \frac{a_2 a_3}{z-a_3-a_4 \,-} \  \frac{a_4a_5}{z-a_5-a_6 \,-}  \ \cdots,
\end{align*}
and both continued fractions converge formally to a series  $\sum_{k = 1}^\infty \frac{c_k}{z^k}$ in $(1/z)\CC[[1/z]]$.
 \item Let $f(z)$ be a complex function, and let $\XX$ be an unbounded subset of $\CC$.    The following conditions are equivalent.
\begin{enumerate}
\item $f(z)$ has the asymptotic expansion  $f(z) \simeq \sum_{k= 1}^\infty \frac{c_k}{z^k}\ (z \to \infty)_{\XX}$.
\item $f(z)$  has the asymptotic continued fraction expansion
\begin{align*}
f(z) \, \simeq \,  \cfrac{\frac{a_1}{z}} {1 \, -} \ \cfrac{\frac{a_2}{z}} {1 \, -}  \ \cfrac{\frac{a_3}{z}} {1 \, -} \ \cfrac{\frac{a_4}{z}} {1 \, -} \  \cdots  \ (z \to \infty)_\XX.
\end{align*}
\item $f(z)$  has the asymptotic continued fraction expansion
\begin{align*}
f(z) \, \simeq \, \frac{a_1}{z-a_2 \,-} \  \frac{a_2 a_3}{z-a_3-a_4 \,-} \  \frac{a_4a_5}{z-a_5-a_6 \,-} \  \cdots  \ (z \to \infty)_\XX.
\end{align*}
\item $f(z) - w_n(z) \sim  \frac{a_1 a_2 \cdots a_{n+1}}{z^{n+1}} \ (z \to \infty)_\XX$ for all  nonnegative integers $n$. 
\item $f(z) - w_n(z) = O\left(\frac{1}{z^{n+1}} \right) \ (z \to \infty)_\XX$ for  infinitely many (or all)   nonnegative integers $n$.
\item $w_{2n}(z)$ for every nonnegative integer $n$ is the unique rational function  $w(z) \in \CC(z)$ of degree at most $n$ such that $f(z) - w(z) = O\left(\frac{1}{z^{2n+1}} \right) \ (z \to \infty)_\XX$.  %(Equivalently, $w_{2n}(z)$ for every positive integer $n$ is the unique Pad\'e approximant of $f(z)$ over $\XX$  at $\infty$ of order $[n-1,n]$.) 
%\item For all positive integers $n$ one has $f_n(z) \sim \frac{a_n}{z} \ (z \to \infty)_\XX$, where $f_n(z)$ is the function defined by the equation
%\begin{align*}
%f(z) \, \simeq \,  \frac{\frac{a_1}{z}} {1 \, -} \ \frac{\frac{a_2}{z}} {1 \, -}  \ \frac{\frac{a_3}{z}} {1 \, -} \ \cdots \ \frac{\frac{a_{n-1}}{z}} {1 \, -}  \   \frac{f_n(z)}{1}.
%\end{align*} 
\end{enumerate}
\item If the equivalent conditions of statement (2) hold, then the best rational approximations of $f(z)$ over $\XX$ are precisely the even-indexed approximants $w_{2n}(z)$. 
\end{enumerate}
\end{corollary}

\begin{proof}
Statement (1) follows from the well-known formulas \cite[(1.4)--(1.5)]{wall} for continued fraction approximants and from the transformation  \cite[(28.2)--(28.4)]{wall}  from Stieltjes  to Jacobi continued fractions, and statements (2) and (3) follow immediately from statement (1) and Theorem \ref{wsimJ}.
\end{proof}

\begin{remark}
Let us say that a rational  function $w \in \CC(z)$ is a {\bf good rational approximation of $f(z)$ over $\XX$ (at $\infty$)} if $\deg v \geq \deg w$ for any any $v \in \CC(z)$ such that $f(z)-v(z) = O(f(z)-w(z)) \ (z \to \infty)_\XX$.  Clearly any best rational approximation is a good rational approximation.
If $f(z)$ satisfies conditions (2)(a)--(f) of Corollary \ref{wsim}, then all of the approximants $w_n(z)$, not just the even-indexed ones, are good rational approximations of $f(z)$ over $\XX$.  (Indeed, if $f(z)-v(z) = O(f(z)-w_n(z)) \ (z \to \infty)_\XX$, then $w_n(z)-v(z) = O\left(\frac{1}{z^{n+1}}\right) \ (z \to \infty)_\XX$; but then $\deg v < \deg w_n = {\left\lfloor \frac{n+1}{2}\right\rfloor}$ implies $\deg w_n+ \deg v < n+1$, which leads to a contradiction, whence $\deg v \geq \deg w_n$.)
\end{remark}

See Sections 3.1 and 3.2 for a proof of Theorems \ref{maincontthm1} and \ref{gentheorem} from Corollary \ref{wsim}.
\subsection{Continued fractions with polynomial terms}

The following  theorem shows that any asymptotic continued fraction expansion at $0$ with respect to a continued fraction that has nonzero terms $a_n(z)$ in $z\CC[z]$ and $b_n(z)$ in $\CC[z]\backslash z\CC[z]$ is equivalent to an asymptotic expansion with respect to the asymptotic sequence $\{z^n\}$  at $0$.  

\begin{theorem}\label{wsimG}
Let $b_0(z) \in \CC[z]$, let $a_n(z), b_n(z) \in \CC[z]$ with $a_n(0) = 0$ to multiplicity $m_n \in \ZZ_{> 0}$ and $b_n(0) =  \beta_n \neq 0$ for all $n \geq 1$, and let  $w_n(z)$ denote the $n$th approximant of the continued fraction
\begin{align*}
 b_0(z)+  \frac{a_1(z)}{b_1(z) \, -} \  \frac{a_2(z)}{b_2(z) \,-} \  \frac{a_3(z)}{b_3(z) \,-} \ \cdots
\end{align*}
One has the following.
\begin{enumerate}
\item $w_n(z)$ is a rational function in $\CC(z)$ with $w_n(z)-b_0(z) \sim \frac{a_1(z)}{\beta_1}  \ (z \to 0)$ and $$w_n(z)-w_{n-1}(z) \sim \frac{a_1(z) a_2(z) \cdots a_n(z)}{(\beta_1\beta_2\cdots \beta_{n-1})^2\beta_n}  \ (z \to 0)$$ for all $n \geq 1$.  Moreover, $\{w_n(z)-w_{n-1}(z)\}_{n = 1}^\infty$ is an asymptotic sequence at $0$, and the given continued fraction converges $(z)$-adically to a series $\sum_{n = 0}^\infty c_n z^n$ in $\CC[[z]]$.  
 \item Let $f(z)$ be a complex function, and let $\XX$ be a subset of $\CC$ having $0$ has a limit point.  The following conditions are equivalent.
\begin{enumerate}
\item $f(z)$ has the asymptotic expansion $f(z) \, \simeq \, \sum_{n = 0}^\infty c_n z^n \ (x \to 0)_{\XX}$.
\item $f(z)$  has the asymptotic continued fraction expansion
\begin{align*}
f(z) \, \simeq \,   b_0(z)+  \frac{a_1(z)}{b_1(z) \, -} \  \frac{a_2(z)}{b_2(z) \,-} \  \frac{a_3(z)}{b_3(z) \,-} \ \cdots \   \cdots \ (z \to 0)_\XX.
\end{align*}
\item $f(z) - w_n(z) \sim  \frac{a_1(z) a_2(z) \cdots a_{n+1}(z)}{(\beta_1\beta_2\cdots \beta_n)^2\beta_{n+1}} \ (z \to 0)_\XX$ for all  nonnegative integers $n$. 
\item $f(z) - w_n(z) = O\left( z^{m_1+m_2 + \cdots+ m_{n+1}} \right) \ (z \to 0)_\XX$ for  infinitely many (or all)   nonnegative integers $n$.
\item  For every positive integer $n$ one has $f_n(z) \sim \frac{a_{n+1}(z)}{\beta_{n+1}} \ (z \to 0)_\XX$, where $f_n(z)$ is the function defined by the equation
\begin{align*}
f(z) =   b_0(z)+  \frac{a_1(z)}{b_1(z) \, -} \  \frac{a_2(z)}{b_2(z) \,-} \  \frac{a_3(z)}{b_3(z) \,-} \ \cdots \   \frac{a_{n}(z)}{b_{n}(z) \,-} \   \frac{f_n(z)}{1},
\end{align*}
or equivalently by the recurrence relation $f_{n+1}(z) = b_{n+1}(z)-\frac{a_{n+1}(z)}{f_n(z)}$, where $f_0(z) = f(z)-b_0(z)$.
\item  For every positive integer $n$, and for some function (or all functions) $g_n(z)$ with $g_n(z) = o(1) \ (x \to 0)_\XX$ and $a_{n+1}(z) = o(g_n(z)) \ (x \to 0)_\XX$,  the function  
\begin{align*}
f(z) -   b_0(z)-  \frac{a_1(z)}{b_1(z) \, -} \  \frac{a_2(z)}{b_2(z) \,-} \  \frac{a_3(z)}{b_3(z) \,-} \ \cdots \   \frac{a_{n}(z)}{b_{n}(z) \, -} \ \frac{g_n(z)}{1}
\end{align*}
is asymptotic over $\XX$ at $0$ to $-\frac{a_1(z) a_2(z) \cdots a_n(z)}{(\beta_1\beta_2\cdots \beta_n)^2} g_n(z)$.
\end{enumerate}
\end{enumerate}
\end{theorem}

\begin{proof}
We may suppose without loss  of generality that $b_0(z) = 0$.  The numerator $A_n = A_n(z)$ and denominator $B_n = B_n(z)$ of $w_n(z) = \frac{ A_n(z)}{B_n(z)}$ are uniquely determined by the well-known recurrence relations \cite[(1.4)]{wall}  and therefore by induction are polynomials, with
$$B_n(z) = b_1(z)b_2(z)\cdots b_n(z) + \, \text{terms each involving some } a_i(z)$$
and
$$A_n(z) = a_1(z)(b_2(z)b_3(z)\cdots b_n(z)+ \, \text{terms each involving some } a_i(z))$$
for all $n$.  Therefore
$$B_n(z) = \beta_1\beta_2\cdots \beta_n+z D_n(z) \quad \text{ and } \quad A_n(z)= a_1(z)(\beta_2\beta_3\cdots \beta_n+ z C_n(z))$$
for some $C_n(z), D_n(z) \in \CC[z]$, and thus $w_n(z) \sim  \frac{a_1(z)\beta_2\beta_3\cdots \beta_n}{\beta_1\beta_2\cdots \beta_n}  = \frac{a_1(z)}{\beta_1} \  (z \to 0).$
Moreover, by \cite[(42.9)]{wall}, one has 
\begin{align*}
w_n(z)-w_{n-1}(z) = \frac{a_1(z) a_2(z) \cdots a_n(z)}{B_n(z)B_{n-1}(z)} = \frac{a_1(z) a_2(z) \cdots a_n(z)}{(\beta_1\beta_2\cdots \beta_n+z D_n(z))(\beta_1\beta_2\cdots \beta_{n-1}+z D_{n-1}(z))},
\end{align*}
and therefore
\begin{align*}
w_n(z)-w_{n-1}(z) 
= a_1(z) a_2(z) \cdots a_n(z)\left(\frac{1}{(\beta_1\beta_2\cdots \beta_{n-1})^2\beta_n} + zG_n(z)\right)
\end{align*}
for some $G_n(z) \in \CC[[z]]$.  It follows that 
\begin{align*}
w_n(z)-w_{n-1}(z) \equiv \frac{a_1(z) a_2(z) \cdots a_n(z)}{(\beta_1\beta_2\cdots \beta_{n-1})^2\beta_n}  \ \left(\operatorname{mod} \, \left(z^{m_1+m_2+\cdots+m_n+1}\right)\right)
\end{align*}
so the continued fraction in the theorem converges $(z)$-adically to a series $F(z) = \sum_{k = 1}^\infty (w_k(z)-w_{k-1}(z))$ in  $\CC[[z]]$.  Moreover, since also $G_n(z)$ is a rational function, it is analytic at $0$, and thus
\begin{align*}
w_n(z)-w_{n-1}(z) - \frac{a_1(z) a_2(z) \cdots a_n(z)}{(\beta_1\beta_2\cdots \beta_{n-1})^2\beta_n}  = O \left(z^{m_1+m_2+\cdots+m_n+1}\right) \ (z \to 0)
\end{align*}
Statement (1) follows.  

By statement (1) and Remark \ref{crem}(1), statements (2)(b)--(d) are equivalent.  Now, one has 
\begin{align*}
F(z)-w_n(z) =  \sum_{k = n+1}^\infty (w_k(z)-w_{k-1}(z)) \equiv w_{n+1}(z)-w_{n}(z) \  \left( \operatorname{mod} \, \left(z^{m_1+m_2+\cdots+m_{n+1}+1}\right)\right),
\end{align*} and therefore $F(z) - w_n(z) = z^{m_1+m_2+\cdots+ m_{n+1}}F_n(z)$
for some invertible $F_n(z) \in \CC[[z]]$.  Thus  one has $$\sum_{k = 1}^{m_1+m_2+\cdots+ m_{n+1}-1} c_kz^k - w_n(z)  = z^{m_1+m_2+\cdots+ m_{n+1}}H_n(z)$$
for some $H_n(z) \in  \CC[[z]]$.  Since then $H_n(z)$ is also a rational function of $z$,  it follows that  $H_n(z)$ is analytic at $0$ and thus
$$\sum_{k = 1}^{m_1+m_2+\cdots+ m_{n+1}-1}  c_kz^k - w_n(z)  = O\left( z^{m_1+m_2+\cdots+ m_{n+1}} \right) \ (z \to 0).$$  Therefore, one has
$$f(z) - \sum_{k = 1}^{m_1+m_2+\cdots+ m_{n+1}-1}c_k z^k  = O\left( z^{m_1+m_2+\cdots+ m_{n+1}} \right) \ (z \to 0)_{
\XX}, \quad \text{ for all } n \geq 0$$
if and only if 
$$f(z) - w_n(z)  = O\left( z^{m_1+m_2+\cdots+ m_{n+1}} \right) \ (z \to 0)_{
\XX}, \quad \text{ for all } n \geq 0.$$
This proves that statements (2)(a) and (2)(d) are equivalent.

Next, we verify the equivalence of (2)(c) and (2)(e).  By \cite[(1.3)]{wall} (or \cite[(1.3.2)]{cuyt}) and the definition of the function $f_n(z)$ one has
\begin{align}\label{fn}
f(z) = \frac{A_n(z)-A_{n-1}(z)f_n(z)}{B_n(z)-B_{n-1}(z)f_n(z)}
\end{align}
and therefore by the determinant formula   \cite[(1.5)]{wall}  \cite[(1.3.4)]{cuyt} one has
\begin{align}\label{AB}
f(z) - w_n(z) = \frac{A_n(z)-A_{n-1}(z)f_n(z)}{B_n(z)-B_{n-1}(z)f_n(z)}- \frac{ A_n(z)}{B_n(z)} =  \frac{a_1(z) a_2(z) \cdots a_n(z) f_n(z)}{B_n(z)(B_n(z)-B_{n-1}(z)f_n(z))}.
\end{align}
Thus, if (2)(e) holds, then 
$$f(z) - w_n(z) \sim  \frac{a_1(z) a_2(z) \cdots a_{n}(z)\frac{a_{n+1}(z)}{\beta_{n+1}}}{(\beta_1\beta_2\cdots \beta_{n})(\beta_1\beta_2\cdots \beta_n)}  =  \frac{a_1(z) a_2(z) \cdots a_{n+1}(z)}{(\beta_1\beta_2\cdots \beta_n)^2\beta_{n+1}},$$
whence (2)(c) holds.   Conversely, inverting (\ref{fn}), we have
\begin{align*}
f_n(z) = \frac{A_n(z)-B_n(z)f(z)}{A_{n-1}(z)-B_{n-1}(z)f(z)} = \frac{f(z)-w_n(z)}{f(z)-w_{n-1}(z)}\cdot \frac{B_n(z)}{B_{n-1}(z)},
\end{align*}
so, if (2)(c) holds, then
\begin{align*}
f_n(z) = \frac{f(z)-w_n(z)}{f(z)-w_{n-1}(z)}\cdot \frac{B_n(z)}{B_{n-1}(z)} \sim \frac{\frac{a_1(z) a_2(z) \cdots a_{n+1}(z)}{ ( \beta_1\beta_2\cdots \beta_n)^2 \beta_{n+1} }}{\frac{a_1(z) a_2(z) \cdots a_{n}(z)}{ (\beta_1\beta_2\cdots \beta_{n-1})^2 \beta_n }} \cdot \frac{\beta_1\beta_2\cdots \beta_n}{\beta_1\beta_2\cdots \beta_{n-1}} = \frac{a_{n+1}(z)}{\beta_{n+1}} \ (z \to \infty)_{\XX},
\end{align*} 
 whence (2)(e) holds.  Therefore, conditions (2)(c) and (2)(e) are equivalent.

Finally, we show that (2)(c) and (2)(f) are equivalent.   Let $$G_n(z) =  b_0(z)+ \frac{a_1(z)}{b_1(z) \, -} \  \frac{a_2(z)}{b_2(z) \,-} \  \frac{a_3(z)}{b_3(z) \,-} \ \cdots \   \frac{a_{n}(z)}{b_{n}(z) \,-} \  \frac{g_n(z)}{1}.$$  As in the proof of (\ref{AB}), one has
$G_n(z) - w_n(z) =  \frac{a_1(z)a_2(z) \cdots a_n(z) g_n(z)}{B_n(z)(B_n(z)-B_{n-1}(z)g_n(z))}$,
and therefore
$$G_n(z) - w_n(z)  \sim \frac{a_1(z) a_2(z) \cdots a_n(z) }{(\beta_1 \beta_2 \cdots \beta_n)^2 }g_n(z) =   o(w_n(z)-w_{n-1}(z)) \ (z \to 0)_\XX$$
Thus, if (2)(c) holds, then $$f(z) -w_n(x) \sim \frac{a_1(z) a_2(z) \cdots a_n(z) }{(\beta_1 \beta_2 \cdots \beta_n)^2 \beta_{n+1}}a_{n+1}(z) = o(- (G_n(z) - w_n(z)) \ (z \to 0)_\XX$$
and therefore
$$f(z)-G_n(z) \sim   - (G_n(z) - w_n(z))  \sim -  \frac{a_1(z) a_2(z) \cdots a_n(z) }{(\beta_1 \beta_2 \cdots \beta_n)^2 }g_n(z)\ (z \to 0)_{\XX},$$   
so that (2)(f) holds.  Conversely, if (2)(f) holds, then
$$f(z)-G_n(z) \sim  -\frac{a_1(z) a_2(z) \cdots a_n(z) }{(\beta_1 \beta_2 \cdots \beta_n)^2 }g_n(z) \sim  -(G_n(z)-w_n(z))  \ (z \to 0)_\XX$$
and therefore
$$f(z) - w_n(z) = o ( -(G_n(z)-w_n(z)) ) = o\left( w_n(z)-w_{n-1}(z)\right) \ (z \to 0)_\XX,$$
so that (2)(b) holds.  Therefore, conditions (2)(b), (2)(c), and (2)(f) are equivalent.
%Finally, the equivalence of statements (2)(d) and (2)(e) is proved  just as in the  proof of Theorem \ref{wsimJ}.
\end{proof}

An  obvious ``inverse'' equivalent of Theorem \ref{wsimG},  obtained by replacing $z$ with $1/z$ and $0$ with $\infty$,  shows that any asymptotic continued fraction expansion at $\infty$ with respect to a continued fraction that has nonzero terms $a_n(z)$ in $(1/z)\CC[1/z]$ and $b_n(z)$ in $\CC[1/z]\backslash (1/z)\CC[1/z]$ is equivalent to an  asymptotic expansion with respect to the asymptotic sequence $\left\{\frac{1}{z^n}\right\}$  at $\infty$.  Since Theorem \ref{wsimG} applies to any {\it associated continued fraction} \cite[p.\ 36]{cuyt},
%, which by definition is related to a Jacobi continued fraction by the transformation $z \longmapsto 1/z$. %An {\bf associated continued fraction} is a continued fraction of the form $$G(1/z)  =  \frac{a_1 z}{1+b_1 z \,-} \  \frac{a_2z^2}{1+b_2z \,-} \  \frac{a_3z^2}{1+b_3z \,-}\ \cdots$$ for some Jacobi continued fraction $G(z)$ as in (\ref{jac}) \cite[p.\ 36]{cuyt}.  Theorem \ref{wsimG} applies to any associated continued fraction, and 
 its ``inverse'' equivalent applies to any  Jacobi continued fraction.  Thus, Theorem \ref{wsimG} yields the equivalence of conditions (2)(a)--(d), (2)(f), and (2)(g) of Theorem \ref{wsimJ}.  Condition (2)(e), however, is unique to Jacobi continued fractions.

Theorem  \ref{wsimG} also applies to any {\it C-fraction},  {\it T-fraction}, or {\it M-fraction} \cite[pp.\ 35--38]{cuyt}.   Notably, a {\bf C-fraction} is a continued fraction of the form
\begin{align}\label{CF}
 G(z) = a_0+\frac{a_1 z^{m_1}} {1 \, -} \ \frac{a_2z^{m_2}} {1 \, -}  \ \frac{a_3z^{m_3}} {1 \, -} \ \frac{a_4z^{m_4}} {1 \, -} \  \cdots,
\end{align}
where $a_n \in \CC$ and $m_n \in \ZZ_{> 0}$ for all  $n$.  Any C-fraction $G(z)$ converges formally to a  series $F(z) \in \CC[[z]]$.  Conversely, any  series $F(z) \in \CC[[z]]$ can be written as the limit of a unique  C-fraction $G(z)$, called the {\bf C-fraction expansion of $F(z)$}.  % There are several algorithms to compute $F(z)$ from $G(z)$ and vice versa, e.g.,  \cite[pp.\ 253--256 and pp.\ 261--263]{lor}.
  The C-fraction expansion $G(z)$ of $F(z)$ is terminating, i.e., $a_n = 0$ for some $n$, if and only if $F(z)$ is a rational function in $\CC(z)$.   For constructive proofs of these assertions, see \cite[Chapter V]{lor}.

%An {\bf associated continued fraction} is a continued fraction $H(z)$ of the form
%$$H(z)  =  \frac{a_1 z}{1+b_1 z \,-} \  \frac{a_2z^2}{1+b_2z \,-} \  \frac{a_3z^2}{1+b_3z \,-}\ \cdots,$$
%where $a_n, b_n \in \CC$ for all $n$.   For $G(z)$ and $H(z)$ defined above one has $G(z) = H(1/z)$ and $H(z) = G(1/z)$, so that the transformation $G(z) \longmapsto G(1/z)$ provides a one-to-one correspondence  between  Jacobi continued fractions and associated continued fractions.   

 A {\bf regular C-fraction} is a C-fraction (\ref{CF})  with $m_n = 1$ for all $n$.   A {\bf modified C-fraction} is a continued fraction of the form $G(1/z)$ for some C-fraction $G(z)$.    Theorem \ref{wsimG} and \cite[Chapter V Theorem 5]{lor} imply that any asymptotic expansion  of the form $f(z) \, \simeq \, \sum_{n = 0}^\infty c_nz^n \ (z \to 0)_{\XX}$, where $ \sum_{n = 0}^\infty c_n z^n \in \CC[[z]]\backslash \CC(z)$, has a unique equivalent formulation as an asymptotic C-fraction expansion over $\XX$ at $0$, and vice versa.  (Thus, any non-rational  real or complex function that is analytic at $0$ has a unique asymptotic C-fraction expansion at $0$.)  Equivalently, any asymptotic expansion of the form $f(z) \, \simeq \, \sum_{n = 0}^\infty \frac{c_n}{z^n} \ (z \to \infty)_{\XX}$, where $ \sum_{n = 0}^\infty c_n z^n  \in \CC[[z]]\backslash \CC(z)$, has a unique equivalent formulation as an asymptotic modified C-fraction expansion over $\XX$ at $\infty$, and vice versa.

\section{Asymptotic continued fraction expansions of $\pi(x)$}

\subsection{Proof of Theorem \ref{maincontthm1}}

We now use (\ref{asex2}), Corollary \ref{wsim}, and a result of Stieltjes to prove Theorem \ref{maincontthm1}. 

\begin{proof}[Proof of Theorem \ref{maincontthm1}]
It is known that the C-fraction expansion of the formal power series $F(z) = \sum_{n = 0}^\infty n!z^{n+1}$ in $\CC[[z]]$ is given by
$\frac{z} {1 \, -} \ \frac{z} {1 \, -}  \ \frac{z} {1 \, -} \ \frac{2z} {1 \, -}  \ \frac{2z} {1 \, -} \ \frac{3z} {1 \, -} \ \frac{3z} {1 \, -}  \  \cdots$.
Indeed, this was first proved by Stieltjes in \cite[No.\ 57]{stie}, from Hankel matrix determinant expressions he obtained in \cite[No.\ 11]{stie} for the terms of a regular C-fraction in terms of its formal power series expansion in $\CC[[z]]$.  As a consequence, since $\PP(e^x)$  has the asymptotic expansion (\ref{pex}), the theorem follows immediately from the equivalence of statements (2)(a)--(c) of Corollary \ref{wsim} (and Remark \ref{crem}(5)).
\end{proof}

As a corollary of Theorems \ref{maincontthm1} and \ref{wsimJ}, we obtain the following.

\begin{corollary}\label{maincor}
The best rational approximations of the function $\PP(e^x)$ are precisely the approximants $w_n(x)$ of the continued fraction $$\frac{1}{x-1 \,-} \  \frac{1}{x-3 \,-} \  \frac{4}{x-5 \,-}\  \frac{9}{x-7 \,-}\  \frac{16}{x-9 \,-} \  \cdots.$$  Moreover, for all $n \geq 0$, one has $\PP(e^x)- w_n(x) \sim \frac{(n!)^2}{x^{2n+1}}  \ (x \to \infty)$, and $w_n(x)$ is the unique rational function of degree at most $n$ such that $\PP(e^x) - w_n(x) = O\left( \frac{1}{x^{2n+1}}\right) \ (x \to \infty)$.  Furthermore, for all $n \geq 1$ and all $a \in \RR$, the function
\begin{align*}
\PP(e^x) -  \frac{1}{x-1 \,-} \  \frac{1}{x-3 \,-} \  \frac{4}{x-5 \,-} \ \cdots \   \frac{(n-2)^2}{x-(2n-3)\, -} \  \frac{(n-1)^2}{x -(2n-1)+a}
\end{align*}
is asymptotic to $\frac{((n-1)!)^2}{x^{2n}}a$ if $a \neq 0$, and to $\frac{(n!)^2}{x^{2n+1}}$ if $a = 0$.
%Moreover,  one has $\PP_n(x) \sim \frac{n^2}{\log x} \ (x \to \infty)$ and
%$\lim_{x \to \infty} \left(\log x - 
%\frac{n^2}{\PP_n(x)}\right) = 2n+1$, where $\PP_n(x)$  is the function defined by the equation
%\begin{align*}
%\PP(x) = \frac{1}{\log x-1 \,-} \  \frac{1}{\log x-3 \,-} \  \frac{4}{\log x-5 \,-}\  \frac{9}{\log x-7 \,-}\  \cdots \   \frac{(n-1)^2}{\log x-(2n-1) - \PP_n(x)}.
%\end{align*}
%has the asymptotic continued fraction expansion
%\begin{align*}
%\PP_n(x) \simeq \frac{n^2}{\log x-(2n+1) \,-} \  \frac{(n+1)^2}{\log x-(2n+3) \,-} \  \frac{(n+2)^2}{\log x-(2n+5) \,-}\  \frac{(n+3)^2}{\log x-(2n+7) \,-}\  \cdots \ (x \to \infty),
%\end{align*}
%and one has $\PP_n(x) \sim \frac{n^2}{\log x} \ (x \to \infty)$ and
%$\lim_{x \to \infty} \left(\log x - 
%\frac{n^2}{\PP_n(x)}\right) = 2n+1$.
\end{corollary}

%Note that $\PP_1(x) = A(x) - 1$.

%\begin{remark} The asymptotic expansion (\ref{asex2}) can be reinterpreted  as  the statement that the function $${\mathbf q}(x)  =    \left.
%  \begin{cases}
%   \PP(e^{1/x}) & \text{if } x > 0 \\
%    0  & \text{if } x = 0
% \end{cases}
% \right.$$ is infinitely differentiable at $0$ from the right with ${{\mathbf q}^{(n)}(0^+)} = n! (n-1)!$ for all $n \geq 1$.   One then has $w_n(x) = R_n(1/x)$, where $R_n(x)$ is the Pad\'e approximant of ${\mathbf q}(x)$  at $x = 0^+$ of order $[n,n]$.   In this context, the prime number theorem is equivalent to the statement that ${\mathbf q}'(0^+) = 1$, and the two asymptotic continued fraction expansions in Theorem \ref{maincontthm1} are equivalent, respectively, to the asymptotic continued fraction expansions
%$$\PP(e^{1/x}) \sim  \frac{x}{1 \,-} \  \frac{x}{1 \,-} \  \frac{x}{1 \,-}\  \frac{2x}{1 \,-}\  \frac{2x}{1\,-} \ \frac{3x}{1 \,-}\  \frac{3x}{1 \,-} \ \cdots \ (x \to 0^+)$$
%and
%$$\PP(e^{1/x}) \sim  \frac{x}{1-x \,-} \  \frac{x^2}{1-3x \,-} \  \frac{(2x)^2}{1-5x \,-}\  \frac{(3x)^2}{1-7x \,-}\  \frac{(4x)^2}{1-9x \,-} \ \cdots \ (x \to 0^+).$$
%Continued fractions of the two types above are known as {\bf C-fractions} and {\bf associated continued fractions}, respectively \cite[pp.\ 35--36]{cuyt}.
%\end{remark}

%Thus, for example, $\PP_1(x)$ is equal to the function $A(x)  - 1$, and one has 
%\begin{align*}
%A(x) \simeq 1+\frac{1}{\log x-3 \,-} \  \frac{4}{\log x-5 \,-} \  \frac{9}{\log x-7 \,-}\  \frac{16}{\log x-9 \,-}\  \cdots \ (x \to \infty).
%\end{align*}

\begin{remark}\label{Laguerre} \
\begin{enumerate}
\item The numerator $P_n(x)$ and denominator  $Q_n(x)$ of the rational function $w_n(x)$ in Corollary \ref{maincor} are monic integer polynomials of degree $n-1$ and $n$, respectively, and by (\ref{detformula}) one has $w_{n+1}(x)-w_{n}(x) = \frac{(n!)^2}{Q_{n+1}(x)Q_{n}(x)}$  for all $n \geq 0$.     Laguerre showed in \cite{lag} that the denominator $Q_n(x)$ of $w_n(x)$ is given by $Q_n(x) = \widehat{L}_n(x)$, where $$\widehat{L}_n(x) = (-1)^n n! L_n(x) = \sum_{k = 0}^n (-1)^k k!{n\choose k}^2 x^{n-k}$$
is the $n$th {\bf monic Laguerre polynomial} and
$$L_n(x) = \sum_{k = 0}^n \frac{(-1)^k}{k!}{n\choose k} x^k = {}_{1}F_{1}(-n;1;x)$$
is the  {\bf $n$th Laguerre polynomial}.  The polynomial $L_n(x)$ is known to have $n$ distinct positive real roots, and its largest root lies between $3n-4$ and $4n+2$ \cite{sko}.  It follows that there is no fixed neighborhood of $\infty$ on which  all of the functions $w_n(x)-w_{n-1}(x)$ are defined.
 % It follows that  $w_{n}(x)-w_{n-1}(x)  =  -\frac{1}{nL_{n}(x)L_{n-1}(x)}$ and   $w_{n}(x) = -\sum_{k = 1}^n\frac{1}{kL_{k}(x)L_{k-1}(x)}$ for all $n \geq 1$. Since the roots of the Laguerre polynomials are real, it also follows that $-e^{-z}E_1(-z)= -\sum_{k = 1}^\infty\frac{1}{kL_{k}(z)L_{k-1}(z)}$ for all $z \in \CC\backslash [0,\infty)$.
\item From  \cite[{[1.14]}]{akh}, one can deduce that the numerator $P_n(x)$ of $w_n(x) = \frac{P_n(x)}{\widehat{L}_n(x)}$ is given by
$P_n(x) =  \sum_{k = 0}^{n-1} a_{n,k} x^{n-1-k},$
%where %$$a_{n,k} = \sum_{j = k}^n  \frac{(-1)^j}{j!} {n \choose j}(j-k)! =  \frac{(-1)^{k} }{k!} {n \choose k} {}_{3}F_{2}(1,1,-(n-k); k+1, k+1; 1)$$
where $$a_{n,k} = \sum_{j = 0}^k  {(-1)^{j}}j!(k-j)! {n \choose j}^2=  (-1)^{k} k! {n \choose k}^2 {}_{3}F_{2}(1,1,-k; n-k+1, n-k+1; 1)$$
for all $n \geq 1$ and $1 \leq k \leq n$,
with explicit values $a_{n,0} = 1$,  $a_{n,1} = 1-n^2$, $a_{n,n-1} = (-1)^{n+1}n!H_n = s(n+1,2)$, and $a_{n,n-2} = (-1)^n n!((n+1)H_n-2n) =(-1)^n\langle \langle n,n-2 \rangle \rangle$, where $H_n = \sum_{k = 1}^n \frac{1}{k}$ denotes the $n$th harmonic number, $s(n,k)$ denotes the signed Stirling number of the first kind, and $\langle \langle n,k \rangle \rangle$ denotes the Eulerian number of the second kind.
\end{enumerate}
\end{remark}

\subsection{Proof of Theorem \ref{gentheorem}}

We now use Corollary \ref{wsim} to prove Theorem \ref{gentheorem}.    For $a, b \in \CC$, consider the  hypergeometric series $${}_{2}F_{0}(a,b;;z) = \sum_{k = 0}^\infty \frac{(a)_k(b)_k}{n!}z^k$$
in $\CC[[z]]$, where $(x)_n = x(x+1)(x+2)\cdots(x-n+1)$ denotes the {\bf Pochhammer symbol}.  %(The given series is either a polynomial in $z$, which holds if and only if $a$ or $b$ is a nonpositive integer, or converges only for $z = 0$.) 
 In \cite[(89.5) and (92.2)]{wall} it is proved that ${}_{2}F_{0}(a,1;;z)$ for all $a \in \CC$ has the formal C-fraction expansion
$$\sum_{k = 0}^\infty (a)_k z^k = {}_{2}F_{0}(a,1;;z)  = \frac{1} {1 \, -} \ \frac{a z} {1 \, -}  \ \frac{1z} {1 \, -} \ \frac{(1+a)z} {1 \, -} \   \frac{2z} {1 \, -} \ \frac{(2+a)z} {1 \, -} \   \frac{3z} {1 \, -} \ \frac{(3+a)z} {1 \, -} \cdots.$$

\begin{proof}[Proof of Theorem \ref{gentheorem}] 
By the  C-fraction expansion above, formally in $\CC[[1/z]]$ one has
$$\sum_{k = 0}^\infty\frac{ (k+n)! }{n!}\frac{1}{z^{k+1}}  = \frac{1}{z}{}_{2}F_{0}(n+1,1;;1/z)  = \cfrac{\frac{1}{z}} {1 \, -} \ \cfrac{\frac{1+n}{z}} {1 \, -}  \ \cfrac{\frac{1}{z}} {1 \, -} \ \cfrac{\frac{2+n}{z}} {1 \, -} \   \cfrac{\frac{2}{z}} {1 \, -} \ \cfrac{\frac{3+n}{z}} {1 \, -} \   \cfrac{\frac{3}{z}} {1 \, -} \ \cfrac{\frac{4+n}{z}} {1 \, -} \cdots.$$
Also, from the asymptotic expansion (\ref{pex}) of $\PP(e^x)$
we easily obtain the asymptotic expansion
\begin{align*}
p_n(e^x) \simeq \sum_{k = 0}^\infty \frac{(k+n)!}{n!} \frac{1}{x^{k+1}} \ (x \to \infty),
\end{align*}
and likewise fron (\ref{asex}) we obtain the same asymptotic expansion for $l_n(e^x)$.
As a consequence, the theorem follows from  the equivalence of statements (2)(a)--(c) of Corollary \ref{wsim}.
\end{proof}

\begin{remark}
By \cite[pp.\ 95--96]{cuyt}, the monic denominator of the $k$th approximant of the Jacobi continued fraction in Theorem \ref{gentheorem} is equal to $\widehat{L}_k^{(n)}(\log x)$, where $\widehat{L}_k^{(n)}(z)$ denotes the {\it $k$th monic  generalized Laguerre  polynomial}.
\end{remark}

To provide further context for Theorem \ref{gentheorem}, we note that the fundamental asymptotic expansion  (\ref{asex2}) of $\PP(x)$ is by definition equivalent to  the asymptotic $p_n(x) \sim \frac{1}{\log x} \ (x \to \infty)$ holding for all $n \geq 0$, and the theorem for $n = 1$ yields the following.

\begin{corollary}\label{Plog}
One has the asymptotic continued fraction expansions
$$\PP(x)\log x  \,  \simeq \, 1+ \cfrac{\frac{1}{\log x}}{1 \,-} \ \cfrac{\frac{2}{\log x}}{1 \,-}\  \cfrac{\frac{1}{\log x}}{1 \,-}\  \cfrac{\frac{3}{\log x}}{1 \,-}\  \cfrac{\frac{2}{\log x}}{1 \,-}\  \cfrac{\frac{4}{\log x}}{1 \,-} \  \cfrac{\frac{3}{\log x}}{1 \,-}\  \cfrac{\frac{5}{\log x}}{1 \,-} \  \cfrac{\frac{4}{\log x}}{1 \,-}  \ \ \cdots \ (x \to \infty)$$
and
$$\PP(x)\log x  \,  \simeq \, 1+ \frac{1}{\log x -2\,-} \  \frac{1\cdot 2}{\log x - 4 \,-}\  \frac{2\cdot 3}{\log x-6 \,-}\  \frac{3 \cdot 4}{\log x - 8 \,-}  \  \frac{4 \cdot 5}{\log x - 10 \,-} \ \cdots \  (x \to \infty).$$
\end{corollary}

 By \cite[p.\ 243]{cuyt}, for all $a \in \CC$ the series $\frac{1}{z}{}_{2}F_{0}(a,1;;1/z) \in \CC[[1/z]]$ has  formal T-fraction expansion 
\begin{align*}
\frac{1}{z}{}_{2}F_{0}(a,1;;1/z) &  = \cfrac{\frac{1}{z}}{1+\frac{1-a}{z} \, -}  \ \cfrac{\frac{1}{z}} {1+\frac{2-a}{z}\, -} \ \cfrac{\frac{2}{z}} {1+\frac{3-a }{z}\, -} \   \cfrac{\frac{3}{z}} {1+\frac{4-a}{z} \, -} \ \cfrac{\frac{4}{z}}{1+\frac{5-a}{z} \, -}  \  \cdots  \\
&  = \frac{1}{z+1-a \, -}  \ \frac{1z} {z+2-a \, -} \ \frac{2z} {z+3-a \, -} \   \frac{3z} {z+4-a \, -} \ \frac{4z}{z+5-a \, -}  \  \cdots.
\end{align*}
Thus, from the ``inverse'' equivalent of Theorem  \ref{wsimG}, and as in the proof of Theorem \ref{gentheorem}, we obtain the following result,  which is a generalization of Proposition \ref{gencornew} and an analogue of Theorem \ref{gentheorem}.

\begin{theorem}\label{gentheoremnew}
For every nonnegative integer $n$ one has the asymptotic continued fraction expansion
$$p_n(x) \, \simeq \,  \cfrac{\frac{1}{\log x}}{1+\frac{-n}{\log x} \, -}  \ \cfrac{\frac{1}{\log x}} {1+\frac{1-n}{\log x}\, -} \ \cfrac{\frac{2}{\log x}} {1+\frac{2-n }{\log x}\, -} \   \cfrac{\frac{3}{\log x}} {1+\frac{3-n}{\log x} \, -} \ \cfrac{\frac{4}{\log x}}{1+\frac{4-n}{\log x} \, -}  \  \cdots \ (x \to \infty),$$
or, equivalently,
$$p_n(x)  \,  \simeq \,  \frac{1}{\log x-n \, -}  \ \frac{1\log x} {\log x+1 -n\, -} \ \frac{2\log x} {\log x+2-n \, -} \   \frac{3\log x} {\log x+3-n\, -} \ \frac{4\log x}{\log x+4-n \, -}  \  \cdots \  (x \to \infty),$$
where  $p_n(x) = \frac{(\log x)^{n}}{n!} \left(\PP(x) - \sum_{k = 0}^{n-1} \frac{k!}{(\log x)^{k+1}}\right).$  Moreover, the same asymptotic continued fraction expansions hold for the function $l_n(x) = \frac{(\log x)^{n}}{n!} \left(\frac{\li(x)}{x} - \sum_{k = 0}^{n-1} \frac{k!}{(\log x)^{k+1}}\right).$
\end{theorem}

Note that the two  continued fractions in the theorem are equivalent, and, if $w_{n,k}(x)$ denotes the $k$th approximant, then by  Theorem  \ref{wsimG}   one has $p_n(x) - w_{n,k}(x) \sim \frac{k!}{(\log x)^k} \ (x \to \infty)$ for all $n$ and $k$.  For $n = 0$, the theorem yields Proposition \ref{gencornew}.  For $n = 1$, it yields the following.

\begin{corollary}\label{rkr}
One has the asymptotic continued fraction expansion
$$\PP(x)\log x \, \simeq \, 1+ \cfrac{\frac{1}{\log x}}{1-\frac{1}{\log x} \, -}  \ \cfrac{\frac{1}{\log x}} {1\, -} \ \cfrac{\frac{2}{\log x}} {1+\frac{1 }{\log x}\, -} \   \cfrac{\frac{3}{\log x}} {1+\frac{2}{\log x} \, -} \ \cfrac{\frac{4}{\log x}}{1+\frac{3}{\log x} \, -}  \  \cdots \ (x \to \infty),$$
or, equivalently,
$$\PP(x)\log x \, \simeq \, 1+ \frac{1}{\log x-1 \, -}  \ \frac{1\log x} {\log x \, -} \ \frac{2\log x} {\log x+1 \, -} \   \frac{3\log x} {\log x+2\, -} \ \frac{4\log x}{\log x+3 \, -}  \  \cdots \  (x \to \infty).$$
\end{corollary}

 Corollary \ref{rkr} also follows from (\ref{asex2}) and the fact, proved by induction, that the $n$th approximant of both continued fractions in the corollary is equal to $\frac{n \cdot n!}{(\log x - n)(\log x)^n} + \sum_{k = 0}^n \frac{k!}{(\log x)^k}$.

\subsection{Measure-theoretic interpretation of Stietljes and Jacobi continued fractions}

In this section we consider the meausure-theoretic aspects of Stieltes' theory of continued fractions  \cite{stie} and their consequences for asymptotic Stieltjes and Jacobi continued fraction expansions.  

Let $\mu$ be a  (positive) measure on $\RR$.  For all integers $k$, the {\bf $k$th moment of $\mu$} is the integral
$$m_k(\mu) = \int_{-\infty}^\infty t^k d \mu(t).$$  In its modern formulation, the {\bf Stieltjes moment problem}, posed and motivated in \cite[No.\ 24]{stie} by Stieltjes in connection with his extensive theory of continued fractions \cite{stie}, is the problem of determining for which sequences $\{\mu_k\}_{k = 0}^\infty$ of real numbers there exists a Borel measure $\mu$ on $[0, \infty)$ such that $\mu_k = m_k(\mu)$ for all nonnegative integers $k$.  To solve this problem Stieltjes introduced what we now call the {\bf Stieltjes transform} of $\mu$, which is the complex function
$$\SS_\mu(z) = \int_{-\infty}^\infty \frac{d \mu(t)}{z-t}.$$   
If the measure $\mu$ is finite, then $\SS_\mu(z)$ is analytic on $\CC\backslash \operatorname{supp} \mu$ with derivative  $\frac{d}{dz}\SS_\mu(z)  = -\int_{-\infty}^\infty \frac{d \mu(t)}{(z-t)^2}$.  Moreover, Stieltjes established in \cite{stie} the following remarkable result.   %(which is stated here in its formuation in terms of Borel measures, and employing specific sign conventions, thus diverging from the formulations we  could find in the literature).
%It is known that $\SS_\mu(z)$ is analytic on $\CC\backslash [0,\infty)$ and has its formal Laurent expansion of at $\infty$ equal to $\sum_{k = 0}^\infty \frac{m_k(\mu)}{z^{k+1}}$.   

\begin{theorem}[\cite{stie} {\cite[Chapter VII Theorems 3 and 4]{lor}} {\cite[Theorems 5.1.1 and 5.2.1]{cuyt}}]\label{mutheorem1}
Let $\{\mu_k\}_{k = 0}^\infty$ be a sequence of real numbers.   There exists a Borel measure $\mu$ on $[0,\infty)$ with infinite support (i.e., that is not a finite sum of point masses) such that $\mu_k = m_k(\mu)$ for all nonnegative integers $k$ if and only if there exists a Stieltjes continued fraction
\begin{align*}
G(z) = \cfrac{\frac{a_1}{z} }{1 \, -} \ \cfrac{\frac{a_2}{z}} {1 \, -}  \ \cfrac{\frac{a_3}{z}} {1 \, -} \ \cfrac{\frac{a_4}{z}} {1 \, -} \  \cdots \ = \ \cfrac{a_1} {z \, -} \ \cfrac{a_2} {1 \, -}  \ \cfrac{a_3} {z \, -} \ \cfrac{a_4} {1 \, -} \  \cdots,
\end{align*}
where $a_n \in \RR_{> 0}$ for all $n$, such that $G(z)$ converges $(1/z)$-adically  in $\CC[[1/z]]$ to the series  $\sum_{k = 0}^\infty\frac{ \mu_k}{ z^{k+1}}$. % (or equivalently such that the $n$th approximant $w_n(z)$ for all $n \geq 1$ satisfies  $w_n(z) \equiv  \sum_{k = 0}^{n-1} \mu_k z^{k+1} \ (\operatorname{mod} \, (z^{n+1}))$).  
If these conditions hold, then
$\SS_\mu(z)$ is analytic on $\CC\backslash [0,\infty)$ and for all $\varepsilon > 0$ has the asymptotic expansion
$\SS_\mu(z) \simeq \sum_{k = 0}^\infty \frac{\mu_k}{z^{k+1}} \ (z \to \infty)_{\CC_\varepsilon}$
over  ${\CC_\varepsilon} = \{z \in \CC: |\operatorname{Arg}(z)| \geq \varepsilon\}$.  Moreover, the Borel measure $\mu$ is unique if and only if the continued fraction  $G(z)$ converges for some $z \in \CC$, in which case it converges for all $z \in \CC \backslash [0,\infty)$ and  $\SS_\mu(z) = G(z)$ for all $z \in \CC\backslash [0,\infty)$.
\end{theorem}

Stieltjes' theorem above and Corollary \ref{wsim} have the following consequence for asymptotic Stieltjes continued fraction expansions.

\begin{theorem}\label{mutheorem}
Let $\mu$ be a Borel measure on $[0, \infty)$ with infinite support and finite moments, let $f$ be a complex function, and let $\XX$ be an unbounded subset of $\CC$.    Then $f$ has the asymptotic expansion
$$f(z) \simeq \sum_{k = 0}^\infty \frac{m_k(\mu)}{z^{k+1}} \ (z \to \infty)_\XX$$
if and only if $f$ has the  asymptotic continued fraction expansion
$$f(z) \, \simeq \,  \cfrac{\frac{a_1}{z}} {1 \, -} \ \cfrac{\frac{a_2}{z}} {1 \, -}  \ \cfrac{\frac{a_3}{z}} {1 \, -} \ \cfrac{\frac{a_4}{z}} {1 \, -}  \  \cdots \ (z \to \infty)_\XX,$$
where the $a_n \in \RR_{> 0}$ are as in Theorem \ref{mutheorem1}. 
%In particular, the Stieltjes transform $\SS_\mu(z)$   for every $\varepsilon > 0$  has the asymptotic continued fraction expansion
%$$\SS_\mu(z) \, \simeq \,   \cfrac{\frac{a_1}{z}} {1 \, -} \ \cfrac{\frac{a_2}{z}} {1 \, -}  \ \cfrac{\frac{a_3}{z}} {1 \, -} \ \cfrac{\frac{a_4}{z}} {1 \, -} \  \cdots  \ (z \to \infty)_{\CC_\varepsilon}$$
%over  $\CC_{\varepsilon} = \{z \in \CC : |\operatorname{Arg}(z)| \geq \varepsilon\}$.
\end{theorem}

\begin{proof}
The theorem  follows immediately Theorem \ref{mutheorem1} and the equivalence of statements (2)(a) and (2)(b) of Corollary  \ref{wsim}. 
\end{proof}

%Note also that $$\Phi_n(z)+\Phi_{n+1}(z) = w_{n+1}(z)-w_{n-1}(z) =  \frac{(-1)^{n-1}b_1 b_2 \cdots b_n a_{n+1}(z)}{B_{n+1}(z)B_{n-1}(z)} $$ and 
%\begin{align}\label{Phi2}
%\Phi_n(z)+\Phi_{n+1}(z) = w_{n+1}(z)-w_{n-1}(z) \sim w_{n}(z)-w_{n-1}(z) =  \Phi_n(x)  \ (z \to \infty)
%\end{align}
%for all $n$.

One also has the following analogues  of Theorems \ref{mutheorem1} and \ref{mutheorem}  for Borel  measures on $\RR$ and Jacobi continued fractions.  Theorem  \ref{mutheorem1bb}  is due to Hamburger \cite{ham}

\begin{theorem}[\cite{ham} {\cite[Theorems 5.1.3 and 5.2.3]{cuyt}}]\label{mutheorem1bb} 
Let $\{\mu_k\}_{k = 0}^\infty$ be a sequence of real numbers. 
There exists a Borel measure $\mu$ on $\RR$  with infinite support such that $\mu_k = m_k(\mu)$ for all nonnegative integers $k$ if and only if there exists a Jacobi continued fraction
\begin{align*}
G(z) = \frac{a_1}{z+b_1 \,-} \  \frac{a_2}{z+b_2 \,-} \  \frac{a_3}{z+b_3 \,-}  \ \cdots,
\end{align*}
where $a_n \in \RR_{> 0}$ and $b_n  \in \RR$ for all $n$, that converges $(1/z)$-adically  in $\CC[[1/z]]$ to the series $\sum_{k = 0}^\infty \frac{\mu_k }{z^{k+1}}$.  If these conditions hold, then
$\SS_\mu(z)$ is analytic on $\CC\backslash \RR$ and for all $\delta, \varepsilon > 0$ has the asymptotic expansion
$\SS_\mu(z) \simeq \sum_{k = 0}^\infty \frac{\mu_k}{z^{k+1}} \ (z \to \infty)_{\CC_{\delta, \varepsilon}}$ over  $\CC_{\delta, \varepsilon} = \{z \in \CC : \delta \leq |\operatorname{Arg}(z)| \leq \pi- \varepsilon\}$, and one has $\SS_\mu(z) =  G(z)$ for all $z \in \CC\backslash \RR$.
\end{theorem}

\begin{theorem}\label{mutheorem1cc}
Let $\mu$ be a  Borel measure on $\RR$ with infinite support and finite moments, let $f$ be a complex function, and let $\XX$ be an unbounded subset of $\CC$.   Then $f$ has the asymptotic expansion
$$f(z) \simeq \sum_{k = 0}^\infty \frac{m_k(\mu)}{z^{k+1}} \ (z \to \infty)_\XX$$
if and only if $f$ has the  asymptotic continued fraction expansion
$$f(z) \, \simeq \,\frac{a_1}{z+b_1 \,-} \  \frac{a_2}{z+b_2 \,-} \  \frac{a_3}{z+b_3 \,-} \  \cdots  \ (z \to \infty)_\XX,$$
where the $a_n \in \RR_{> 0}$ and $b_n \in \RR$ are as in Theorem \ref{mutheorem1bb}.  %In particular, the Stieltjes transform $\SS_\mu(z)$   for every $\delta, \varepsilon > 0$  has the asymptotic continued fraction expansion
%$$\SS_\mu(z) \, \simeq \, \frac{a_1}{z+b_1 \,-} \  \frac{a_2}{z+b_2 \,-} \  \frac{a_3}{z+b_3 \,-} \  \cdots  \ (z \to \infty)_{\CC_{\delta, \varepsilon}}$$
%over  $\CC_{\delta, \varepsilon} = \{z \in \CC : \delta \leq |\operatorname{Arg}(z)| \leq \varepsilon\}$.
\end{theorem}

\begin{proof}
The theorem  follows immediately Theorem \ref{mutheorem1bb}  and the equivalence of statements (2)(a) and (2)(b) of Theorem \ref{wsimJ}. 
\end{proof}
%\begin{remark} Theorem \ref{mutheorem1cc} is a substantial generalization of \cite[Theorem 84.1]{wall}, which proves the equivalence in the theorem for $\XX = \{z \in \CC: \operatorname{Im}(z) \geq \delta\}$ for all $\delta >0$ and in the restrictive sense of \cite[Definitions 84.1 and 84.2]{wall}.   Also, by \cite[pp.\ 324--325]{wall}, the regions $\CC_{\delta,\varepsilon}$ in Theorem \ref{mutheorem1bb}  %and \ref{mutheorem1cc}  can be replaced with $\{z \in \CC: \operatorname{Im}(z) \geq \delta\}$ for all $\delta >0$.  
%\end{remark}

\subsection{Measure-theoretic proof  of Theorems \ref{maincontthm1} and \ref{gentheorem}}

In this section we use the results of the previous section to provide an alternative measure-theoretic proof of Theorems \ref{maincontthm1} and \ref{gentheorem}.  

Let $\gamma_0$ denote the probability measure on $[0,\infty)$ with density function $e^{-t}$, which is known as the {\bf exponential distribution with rate parameter $1$}.    The measure $\gamma_0$ is the unique Borel measure $\mu$ on $[0,\infty)$ that has $k$th moment $m_k(\mu) = \int_0^\infty t^k d\mu(t)$ equal to $k!$ for all $k \geq 0$.   Moreover, its density function $e^{-t}$ is the unique piecewise continuous function $\rho(t)$ on $[0,\infty)$ whose Mellin transform $\int_0^\infty t^{s-1}\rho(t)\, dt$ is equal to the gamma function $\Gamma(s)$. Furthermore, $\gamma_0$ is the unique Borel  measure $\mu$ on $[0,\infty)$ whose Stieltjes transform on $\CC\backslash [0,\infty)$ is equal to $-e^{-z}E_1(-z)$, where $E_1(z)$ is the {\bf exponential integral function}
$$E_1(z) = \int_{z}^\infty \frac{e^{-t}}{t} dt, \quad z \in \CC \backslash (-\infty, 0],$$
where the integral is along any path of integration not crossing $(-\infty, 0]$.  The function $E_1(z)$ is analytic on $\CC \backslash (-\infty,0]$, while $E_1(x) := \lim_{\varepsilon \to 0^+} E_1(x+\varepsilon i)$ and $\lim_{\varepsilon \to 0^-} E_1(x+\varepsilon i) = \overline{E_1(x)}$ for all $x < 0$.  The function $E_1(x)$ for nonzero real $x$ is given by
\begin{align}\label{E1x} E_1(x) =   \left.
  \begin{cases}
   -\li(e^{-x}) & \text{if } x > 0 \\
    -\li(e^{-x})-\pi i   & \text{if } x < 0.
 \end{cases}
\right.
\end{align}
In  \cite[No.\ 57]{stie}, Stieltjes proved that the Stieltjes transform $\SS_{\gamma_0}(z) = -e^{-z}E_1(-z)$ of $\gamma_0$ has the Stieltjes and Jacobi continued fraction expansions
\begin{align}
 -e^{-z}E_1(-z) & = \frac{1}{z \,-} \  \frac{1}{1 \,-} \  \frac{1}{z \,-}\  \frac{2}{1 \,-}\  \frac{2}{z \,-} \  \frac{3}{1 \,-}\  \frac{3}{z \,-} \ \cdots  \label{E11} \\
&  = \frac{1}{z-1 \,-} \  \frac{1}{z-3 \,-} \  \frac{4}{z-5 \,-}\  \frac{9}{z-7 \,-}\  \frac{16}{z-9 \,-} \  \cdots   \label{E12}
\end{align}
on $\CC \backslash [0,\infty)$.  
%The well-known continued xfraction expansion
%$$\frac{-E_1(-z)}{e^z} =  \frac{1}{z \,-} \  \frac{1}{1 \,-} \  \frac{1}{z \,-}\  \frac{2}{1 \,-}\  \frac{2}{z \,-} \  \frac{3}{1 \,-}\  \frac{3}{z \,-} \ \cdots,  \quad z \in \CC \backslash [0,\infty),$$ 
%of $\frac{-E_1(-z)}{e^z}$ holds---both are precisely the Stieltjes transform of the measure $\gamma_0$.  %Whether or not the continued fraction above converges on $[0,\infty)$, using Stieltjes' theory one can verify the {\it asymptotic} continued fraction expansion 
%$$\frac{\li(e^x)+\pi i}{e^x} = \frac{-E_1(-x)}{e^x} \simeq  \frac{1}{x \,-} \  \frac{1}{1 \,-} \  \frac{1}{x \,-}\  \frac{2}{1 \,-}\  \frac{2}{x \,-} \  \frac{3}{1 \,-}\  \frac{3}{x \,-} \ \cdots \ (x \to \infty),$$ 
%which, along with de la Vall\'ee Poussin's prime number theorem with error term, yields Theorem \ref{maincontthm1}.   A (more) complete proof of Theorem \ref{maincontthm1}, using Theorems \ref{mutheorem}  and \ref{mutheorem1cc}  and the fundamental asymptotic expansion (\ref{asex2}) of $\PP(x)$,  is provided below.   

\begin{proof}[Proof of Theorem \ref{maincontthm1}]
Stieltjes proved in \cite[No.\ 57]{stie} that  $\gamma_0$ is the unique Borel measure associated as in Theorem \ref{mutheorem1} to the Stieltjes continued fraction (\ref{E11}).     The moments of  $\gamma_0$ are given by $m_k(\gamma_0) = k!$ for all $k$.  Therefore, by (\ref{asex2}), one has the asymptotic expansion $\PP(e^x) \simeq \sum_{k = 0}^{\infty} \frac{m_k(\gamma_0)}{x^{k+1}} \ (x \to \infty).$
The theorem therefore follows from Theorems \ref{mutheorem} and \ref{mutheorem1cc} and Remark \ref{crem}(5).
\end{proof}

To prove Theorem \ref{gentheorem}, we extend our analysis of the probability measure $\gamma_0$ to the probability measure $\gamma_n$ on $[0,\infty)$ with density function $\frac{t^{n}}{n!}e^{-t}$.  The measure $\gamma_n$ is the gamma distribution with  shape parameter  $n+1$  and rate parameter $1$ and is the $(n+1)$-fold convolution of the measure $\gamma_0$ with itself, where for all $a,b > 0$ the {\bf gamma distribution with shape parameter $a$ and and rate parameter $b$} is the  probability measure $\gamma_{a,b}$ on $[0,\infty)$ with density function $\frac{b^a}{\Gamma(a)}t^{a-1}e^{-bt}$, which has moments given by $m_k(\gamma_{a,b}) =  \frac{(a)_k}{b^{k}}$ for all $k$, where $(a)_k$ denotes the Pochhammer symbol.  By \cite[pp.\ 239--240]{cuyt}, the Stieltjes transform $\SS_{\gamma_{a,b}}(z)$ of $\gamma_{a,b}$ is equal to $-be^{-bz}E_{a}(-bz)$, where the {\bf exponential integral function} $E_a(z)$, for any $a \in \CC$, is the function
\begin{align*}
E_a(z) = z^{a-1}\int_{z}^\infty \frac{e^{-t}}{t^a} dt =  z^{a-1}\Gamma(1-a,z), \quad z \in \CC \backslash (-\infty,0],
\end{align*}
where $$\Gamma(s,z) = \int_{z}^\infty t^{s-1}e^{-t} dt, \quad s \in \CC, \quad z \in \CC \backslash (-\infty,0],$$ denotes the {\bf upper incomplete gamma function}, where the integrals are along any path of integration not crossing $(-\infty, 0]$ \cite[pp.\ 238 and 275]{cuyt}. % (Thus, $E_1(z) = \Gamma(0,z)$ and $\Gamma(s) = \Gamma(s,0)$.)   
Stieltjes proved in  \cite[No.\ 62]{stie} that the Stieltjes transform $\SS_{\gamma_{a,1}}(z) = -e^{-z}E_a(-z)$ of $\gamma_{a,1}$ has the continued fraction expansions
\begin{align}
-e^{-z}E_{a}(-z) & =  \cfrac{\frac{1}{z}}{1 \,-} \  \cfrac{\frac{a}{z}}{1 \,-}\  \cfrac{\frac{1}{z}}{1 \,-}\  \cfrac{\frac{1+a}{z}}{1 \,-}\  \cfrac{\frac{2}{z}}{1 \,-}\  \cfrac{\frac{2+a}{z}}{1 \,-} \ \cfrac{\frac{3}{z}}{1 \,-}\  \cfrac{\frac{3+a}{z}}{1 \,-} \ \cdots  \label{Eae}  \\
  &=  \frac{1}{z -a\,-} \  \frac{a}{z - 2-a \,-}\  \frac{2(1+a)}{z-4-a \,-}\  \frac{3(2+a)}{z -6-a \,-}  \ \frac{4(3+a)}{z -8-a \,-} \  \cdots  
\end{align}
on $\CC\backslash [0,\infty)$ (and  both continued fractions converge  formally in $\CC[[1/z]]$ to $(1/z){}_{2}F_{0}(a,1;;1/z)$).     It follows that  the Stieltjes transform $\SS_{\gamma_n}(z) = -e^{-z}E_{n+1}(-z)$ of the measure $\gamma_n = \gamma_{n+1,1}$ has the continued fraction expansions (\ref{Enexpansion}) and (\ref{Enexpansion2}) on $\CC\backslash [0,\infty)$.

\begin{proof}[Proof of Theorem \ref{gentheorem}]
By  \cite[No.\ 62]{stie}, the measure $\gamma_{a,1}$ for any $a > 0$ is the unique Borel measure associated as in Theorem \ref{mutheorem1} to the  continued fraction (\ref{Eae}), and thus $\gamma_n$ to the  continued fraction (\ref{Enexpansion}).   The moments  of $\gamma_n$ are given by $m_k(\gamma_n) =  \frac{(k+n)!}{n!}$ for all $k$.  But then, from the asymptotic expansion (\ref{pex}) of $\PP(e^x)$,
we easily obtain the asymptotic expansion $p_n(e^x) \simeq \sum_{k = 0}^\infty \frac{ m_k(\gamma_n) }{x^{k+1}} \ (x \to \infty)$, and likewise for $l_n(e^x)$.  The theorem therefore follows from Theorems \ref{mutheorem} and \ref{mutheorem1cc}.
\end{proof}

%We   provide further context for the functions $l_n(x)$ in  Theorem \ref{gentheorem}, as follows. For all $a \in \CC$ and all $x <0$ one sets $E_a(x) := \lim_{\varepsilon \to 0^+} E_a(x+\varepsilon i)$, where also $\overline{E_a(x)} = \lim_{\varepsilon \to 0^-} E_a(x+\varepsilon i)$. By (\ref{E1x}) and (\ref{En}), one has $$l_n(e^x) =  \operatorname{Re}(-e^{-x}E_{n+1}(-x)) =  \operatorname{Re}\SS_{\gamma_n}(x+0^+i) = \pi {\mathcal H}_{\gamma_n}(x),$$

%$$\SS_{\gamma_n}(x+0^+i)  = \lim_{\varepsilon \to 0^+}\left( -e^{-(x+\varepsilon i)}E_{n+1}(-(x+\varepsilon i)) \right) = l_n(e^x)- \pi i \frac{x^{n}e^{-x}}{n! },$$
%and therefore $$l_n(e^x) =  \operatorname{Re}(-e^{-x}E_{n+1}(-x)) 

\begin{remark}
We   provide further context for the functions $l_n(x)$ in  Theorem \ref{gentheorem}, as follows.  Let $\mu$ be a finite measure on $\RR$.  Let $\SS_\mu(x+0^+i) = \lim_{\varepsilon \to 0^+}  \SS_\mu(x+\varepsilon i)$  for all $x \in \RR$ such that the limit exists.  In fact the limit exists for all $x \in \RR$ outside a set of Lebesgue measure zero, and  its real part by definition is equal to $\pi$ times the {\bf Hilbert transform}  ${\mathcal H}_\mu(x) = \frac{1}{\pi}  \operatorname{Re} \SS_\mu(x+0^+i)$ of the measure $\mu$ \cite{PSZ}. 
%%%In \cite[No.\ 39]{stie}, Stieltjes proved what is now known as the {\bf Stieltjes inversion theorem}: if $\mu$ is a finite Borel measure on $\RR$ with density function  $\rho(t)$, then $$\lim_{\varepsilon \to 0^+} \frac{1}{\pi} \operatorname{Im} \SS_\mu(x+\varepsilon i)  =\frac{1}{\pi} \lim_{\varepsilon \to 0} \int_{-\infty}^\infty \frac{\varepsilon \rho(t) \, d t}{(x-t)^2+\varepsilon^2} =    \frac{\rho^-(x)+\rho^+(x)}{2},$$ exists for all $x$, where $\rho^{-}(x) = \lim_{t \to x^{-}} \rho(t)$ and $\rho^{+}(x) = \lim_{t \to x^{+}} \rho(t)$ for all $x$; consequently,  one has
%%%$$\SS_\mu(x+0^+i)   = \pi{\mathcal H}_\mu(x) -  \pi i\frac{\rho^-(x)+\rho^+(x)}{2}$$ for all $x$ such that either $\SS_\mu(x+0^+i)$ or ${\mathcal H}_\mu(x)$ exists. 
For all $a \in \CC$ and all $x <0$ one sets $E_a(x) := \lim_{\varepsilon \to 0^+} E_a(x+\varepsilon i)$, where also $\overline{E_{\overline{a}}(x)}  =  \lim_{\varepsilon \to 0^-} E_a(x+\varepsilon i)$.    By \cite[(14.1.9)]{cuyt}, one has
$-e^{-z}E_{n+1}(-z)  = \frac{z^n}{n!} \left(-e^{-z}E_{1}(-z)- \sum_{k = 0}^{n-1}\frac{k!}{z^{k+1}}\right)$
for all $z \in \CC \backslash \{0\}$,  which, along with (\ref{E1x}), implies that
$$\SS_{\gamma_n}(x+0^+i)  = \lim_{\varepsilon \to 0^+}\left( -e^{-(x+\varepsilon i)}E_{n+1}(-(x+\varepsilon i)) \right) = l_n(e^x)- \pi i \frac{x^{n}e^{-x}}{n! },$$
and therefore $$l_n(e^x) =  \operatorname{Re}(-e^{-x}E_{n+1}(-x)) =  \operatorname{Re}\SS_{\gamma_n}(x+0^+i) = \pi {\mathcal H}_{\gamma_n}(x),$$
for all $x > 0$. It is clear, more generally, that
$\SS_{\gamma_{a,1}}(x+0^+i)  =  -e^{-x}\overline{E_a(-x)}$ and therefore
$\pi {\mathcal H}_{\gamma_{a,1}}(x) = \operatorname{Re}\SS_{\gamma_{a,1}}(x+0^+i) = \operatorname{Re}(-e^{-x}E_a(-x))$  for all $a > 0$ and all $x> 0$.
%%%%%By \cite{}, one has the asymptotic expansion 
%\begin{align}\label{mun1}
%-e^{-z}E_a(-z) \simeq \sum_{k = 0}^\infty \frac{(a)_k}{z^{k+1}} \ (z \to \infty).
%\end{align}
%on all of $\CC$, and therefore, for the measure $\mu = \gamma_{a,1}$, one has the asymptotic expansion
%\begin{align}\label{mun}
%\SS_{\mu}(x+0^+i) \simeq \sum_{k = 0}^\infty \frac{m_k(\mu)}{x^{k+1}} \ (x \to \infty).
%\end{align}
%Unfortunately, however, we do not know a general characterization of the finite measures $\mu$  for which  (\ref{mun}) holds.    It is certainly not the case that the asymptotic expansion (\ref{mun}) is limited just to the measures $\mu = \gamma_{a,1}$.  For example, by \cite[Theorem 5.2.3]{cuyt}, (\ref{mun})  holds if $\mu$ is supported on a closed and bounded interval. 
\end{remark}

\section{Further asymptotic expansions of $\pi(x)$ and related functions}

\subsection{Asymptotic continued fraction expansions and harmonic numbers}

As a corollary of Theorem \ref{maincontthm1}, Corollary \ref{Plog}, and Lemmas \ref{asympprop} and \ref{asympprop2a}, we obtain the following.

\begin{proposition}\label{LP}
Let $L(x)$ be any function such that $L(x) = \log x + o ((\log x)^{-t}) \  (x \to \infty)$ for all $t > 0$, and let $P(x)$ be any function such that $P(x) = \frac{\li(x)}{x} + o ((\log x)^{-t}) \  (x \to \infty)$ for all $t > 0$.   For all $t \in \RR$, one has the following asymptotic continued fraction expansions.
\begin{enumerate}
\item  $\displaystyle P(e^t x) \,  \simeq \,  \cfrac{\frac{1}{L(x)+t}}{1 \,-} \ \cfrac{\frac{1}{L(x)+t}}{1 \,-}\  \cfrac{\frac{1}{L(x)+t}}{1 \,-}\  \cfrac{\frac{2}{L(x)+t}}{1 \,-}\  \cfrac{\frac{2}{L(x)+t}}{1 \,-}\  \cfrac{\frac{3}{L(x)+t}}{1 \,-} \  \cfrac{\frac{3}{L(x)+t}}{1 \,-}\  \cfrac{\frac{4}{L(x)+t}}{1 \,-}  \ \cfrac{\frac{4}{L(x)+t}}{1 \,-}  \ \cdots \ (x \to \infty)$.
\item  $\displaystyle P(e^t x)  \,  \simeq \, \frac{1}{L(x)+t -1\,-} \  \frac{1}{L(x)+t  - 3 \,-}\  \frac{4}{L(x)+t-5\,-}\  \frac{9}{L(x)+t - 7 \,-} \ \frac{16}{L(x)+t-9\,-}\   \cdots \  (x \to \infty).$
%\item $\displaystyle L( x)+t - \frac{1}{P(e^t x)}\,  \sim \,  \cfrac{1}{1 \, -} \  \cfrac{\frac{1}{L(x)+t}}{1 \,-}\  \cfrac{\frac{2}{L(x)+t}}{1 \,-}\  \cfrac{\frac{2}{L(x)+t}}{1 \,-}  \ \cfrac{\frac{3}{L(x)+t}}{1 \,-}\  \cfrac{\frac{3}{L(x)+t}}{1 \,-} \  \cdots \ (x \to \infty).$
%\item $\displaystyle L( x)+t - \frac{1}{P(e^t x)}\,  \sim \, 1+  \cfrac{1}{L(x)+t-3 \,-} \  \cfrac{4}{L(x)+t-5 \,-}\  \cfrac{9}{L(x)+t-7 \,-}\  \cdots \ (x \to \infty).$
\item $\displaystyle {P(e^t x)(L(x)+t )}   \, \simeq \, 1+ \cfrac{\frac{1}{L(x)+t}}{1 \,-} \ \cfrac{\frac{2}{L(x)+t}}{1 \,-}\  \cfrac{\frac{1}{L(x)+t}}{1 \,-}\  \cfrac{\frac{3}{L(x)+t}}{1 \,-}\  \cfrac{\frac{2}{L(x)+t}}{1 \,-}\  \cfrac{\frac{4}{L(x)+t}}{1 \,-} \  \cfrac{\frac{3}{L(x)+t}}{1 \,-}\  \cfrac{\frac{5}{L(x)+t}}{1 \,-}  \ \cdots \ (x \to \infty).$
\item $\displaystyle {P(e^t x)(L(x)+t) }   \, \simeq \,  1+\frac{1}{L(x)+t-2 \,-} \  \frac{1\cdot 2}{L(x)+t-4 \,-}\  \frac{2 \cdot 3}{L(x)+t-6 \,-}\  \frac{3 \cdot 4}{L(x)+t-8 \,-}\ \cdots \ (x \to \infty)$.
\end{enumerate}
\end{proposition}

%Note that $f(x) = g(x) + o ((\log x)^{-t}) \  (x \to \infty)$ holds for all $t > 0$ if and only if it holds for all $t \in \RR$, if and only if it holds for some set of $t \in \RR$ that is unbounded above, if and only if it holds for every positive integer $t$.

\begin{example}  The following provide examples of functions $L(x)$ satisfying the hypotheses of Proposition \ref{LP}.
\begin{enumerate}
\item  The {\bf harmonic numbers} $H_n$ are defined by $H_n = \sum_{k = 1}^n \frac{1}{k}$ and are interpolated by the complex function $H_z = \Psi(z+1)+ \gamma$,
where $\gamma$ is the Euler--Mascheroni constant and $\Psi(z) = \frac{\Gamma'(z)}{\Gamma(z)}$ is the {\bf digamma function}.  It is well known that $H_z -\gamma -\log z \sim \frac{1}{2z} \ (z \to \infty).$
Moreover, one has $H_z - \gamma = \Psi(z+1) = \Psi(z) + \frac{1}{z}$ for all $z \in \CC \backslash \{0,-1,-2,-3,\ldots \}$.  Thus,  one has
\begin{align}\label{harm}
H_x -\gamma = \log x+ o((\log x)^{-t}) \ (x \to \infty),
\end{align}
and likewise $\Psi(x) = \log x + o((\log x)^{-t}) \ (x \to \infty)$, for all $t > 0$.
\item The famous {\it Mertens' theorems} were proved by Mertens in 1874, over two decades before the first proofs of the prime number theorem.  The third of Mertens' theorems states that
$e^{\gamma} \prod_{p \leq x} \left(1-\frac{1}{p} \right)  \sim \frac{1}{\log x} \ (x \to \infty),$ where $e^\gamma  = \lim_{n \to \infty} \frac{e^{H_n}}{n} = 1.78107241\ldots.$  Given Mertens' theorem, the prime number theorem is equivalent to $\PP(x) \sim e^\gamma \prod_{p \leq x} \left(1-\frac{1}{p} \right) \ (x \to \infty)$.   Mertens' theorem can be improved to show that the function $L(x) = e^{-\gamma}\prod_{p \leq x}\left(1-\frac{1}{p}\right)^{-1}$  satisfies $L(x) \sim \log x + o ((\log x)^{-t}) \  (x \to \infty)$ for all $t > 0$ \cite[Lemma 5]{lan}.
\end{enumerate}
\end{example}

\begin{example}
Generalizing the obvious example $\frac{\pi(x)}{x}$,  Theorem \ref{arithsemigroup} of Section 5 provides examples $ \frac{\pi_G(x^{1/\delta})}{x}$ and $A\PP_G(x^{1/\delta})$ of  functions $P(x)$ that satisfiy the hypothesis $P(x) \sim \frac{\li(x)}{x} + o ((\log x)^{-t}) \  (x \to \infty)$, for all $t > 0$, of Proposition \ref{LP}.
\end{example}

%\begin{lemma}\label{simpleprop}
%Let $f(x)$ be a complex-valued function of a real variable defined to the right of $0$, and let $g(x)$ be the function $f(1/\log x)$ defined in a neighborhood of $\infty$.  (Equivalently, we may let  $g(x)$ be any complex-valued function  of a real variable defined in a neighborhood of $\infty$ and let $f(x) = g(e^{1/x})$ to the right of $0$.)  Let $a_n \in \CC$ for all $n \geq 0$, let $N$ be a positive integer, and let $L(x)$ be any function such that $L(x) = \log x + o \left(\frac{1}{(\log x)^{N-1}} \right) (x \to \infty)$ (such as $L(x) = \log x$, $L(x) = H_x - \gamma$, or $L(x) = \Psi(x)$).  Let $t \in \RR$.  The following asymptotic expansions of order $N+1$  are equivalent.
%\begin{enumerate}
%\item $ \displaystyle f(x) \sim \sum_{n = 0}^N a_n x^n \ (x \to 0^+).$
%\item $ f(1/x) = \displaystyle g(e^x) \sim \sum_{n = 0}^N \frac{a_n }{x^n }\ (x \to \infty).$
%\item $\displaystyle g(e^t x) \sim \sum_{n = 0}^N \frac{a_n }{(L(x)+t)^n} \ (x \to \infty)$.
%\end{enumerate}
%\end{lemma}

From Proposition \ref{LP} and (\ref{harm}), we obtain the following.

\begin{corollary}\label{pas2}
One has the following asymptotic continued fraction expansions.
\begin{enumerate}
\item $\displaystyle \PP(x)\,  \simeq \,   \cfrac{\frac{1}{H_x-\gamma}}{1 \, -} \  \cfrac{\frac{1}{H_x-\gamma}}{1 \,-}\ \cfrac{\frac{1}{H_x-\gamma}}{1 \,-}\  \cfrac{\frac{2}{H_x-\gamma}}{1 \,-}\  \cfrac{\frac{2}{H_x-\gamma}}{1 \,-}  \ \cfrac{\frac{3}{H_x-\gamma}}{1 \,-}\  \cfrac{\frac{3}{H_x-\gamma}}{1 \,-} \  \cdots \ (x \to \infty).$
\item $\displaystyle  \PP(x) \, \simeq \,  \frac{1}{H_x-\gamma-1 \,-} \ \frac{1}{H_x-\gamma-3 \,-} \  \frac{4}{H_x-\gamma-5 \,-}\  \frac{9}{H_x-\gamma-7 \,-}\  \cdots \ (x \to \infty)$.
\item $\displaystyle \PP(e^\gamma x)\,  \simeq \,   \cfrac{\frac{1}{H_x}}{1 \, -} \  \cfrac{\frac{1}{H_x}}{1 \,-}\ \cfrac{\frac{1}{H_x}}{1 \,-}\  \cfrac{\frac{2}{H_x}}{1 \,-}\  \cfrac{\frac{2}{H_x}}{1 \,-}  \ \cfrac{\frac{3}{H_x}}{1 \,-}\  \cfrac{\frac{3}{H_x}}{1 \,-} \  \cdots \ (x \to \infty).$
\item $\displaystyle  \PP(e^\gamma x) \, \simeq \,  \frac{1}{H_x-1 \,-} \ \frac{1}{H_x-3 \,-} \  \frac{4}{H_x-5 \,-}\  \frac{9}{H_x-7 \,-}\  \frac{16}{H_x-9 \,-}\ \cdots \ (x \to \infty)$.
%\item $\displaystyle {\PP(e^\gamma x)H_x}   \, \simeq \, 1+ \frac{\frac{1}{H_x}}{1 \,-} \ \frac{\frac{2}{H_x}}{1 \,-}\  \frac{\frac{1}{H_x}}{1 \,-}\  \frac{\frac{3}{H_x}}{1 \,-}\  \frac{\frac{2}{H_x}}{1 \,-}\  \frac{\frac{4}{H_x}}{1 \,-} \  \frac{\frac{3}{H_x}}{1 \,-}\  \frac{\frac{5}{H_x}}{1 \,-}  \ \cdots \ (x \to \infty).$
%\item $\displaystyle {\PP(e^\gamma x)H_x}   \, \simeq \,  1+\frac{1}{H_x-2 \,-} \  \frac{2}{H_x-4 \,-}\  \frac{6}{H_x-6 \,-}\  \frac{12}{H_x-8 \,-}\ \cdots \ (x \to \infty)$.
\end{enumerate}
\end{corollary}

%For any $s \in\CC \backslash \{0\}$, let $M_n^{(s)} = \left(\frac{1}{n}\sum_{k = 1}^n k^s\right)^{1/s}$ denote the $s$-power mean of the numbers $1,2,3,\ldots,n$, so that $M_n^{(-1)} = \frac{n}{H_n}$ is the harmonic mean of the numbers $1,2,3,\ldots,n$.  Statement (1) of Corollary \ref{pas2} can be re-expressed as
%$$\PP(e^\gamma n)\,  \sim \,   \frac{M_n^{(-1)}}{n \, -} \  \frac{M_n^{(-1)}}{1 \,-}\ \frac{M_n^{(-1)}}{n \,-}\  \frac{2M_n^{(-1)}}{1 \,-}\  \frac{2M_n^{(-1)}}{n \,-}  \ \frac{3M_n^{(-1)}}{1 \,-}\  \frac{3M_n^{(-1)}}{n \,-} \  \cdots \ (n \to \infty)$$
%and as
%$$\pi(e^\gamma n)\,  \sim \,   \frac{e^\gamma M_n^{(-1)}}{1 \, -} \  \frac{M_n^{(-1)}}{n \,-}\ \frac{M_n^{(-1)}}{1 \,-}\  \frac{2M_n^{(-1)}}{n \,-}\  \frac{2M_n^{(-1)}}{1 \,-}  \ \frac{3M_n^{(-1)}}{n \,-}\  \frac{3M_n^{(-1)}}{1 \,-} \  \cdots \ (n \to \infty).$$

\subsection{Asymptotic expansions related to some combinatorial integer sequences}

%For any nonnegative integer  $n$, the positive integer $n!$ equals the number of permutations of any $n$-element set.   
In this section we derive some analogues of the asymptotic expansion (\ref{asex2}) of $\PP(x)$ for various combinatorial sequences related to the sequence of factorials.

The following lemma is a straightforward application of the binomial theorem.

\begin{lemma}\label{asympprop2b}
Let $a_n, c_n \in \CC$ for all $n \geq 0$, let $f$ be a complex function, and let $\XX$ be an unbounded subset of $\CC$.  The asymptotic expansion  $f(x) \simeq \sum_{n = 0}^\infty \frac{a_n}{x^n} \ (x \to \infty)_\XX$ of $f$ with respect to $\left\{\frac{1}{x^n} \right\}$ is equivalent to the asymptotic expansion
$$f(x) \simeq \sum_{n = 0}^\infty \left( \sum_{k = 0}^n {n \choose k} a_k c_k^{n-k} \right) \! \frac{1}{(x+c_n)^n} \ (x \to \infty)_\XX$$
of $f$  with respect to $\left\{\frac{1}{(x+c_n)^n} \right\}$.
\end{lemma}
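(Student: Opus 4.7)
The plan is to apply Proposition~\ref{asympprop} together with the generalized binomial series to convert between the two expansions. First I would verify that $\{1/(x+c_n)^n\}_{n \geq 0}$ is an asymptotic sequence at $x = \infty$: since $(x+c_n)^n \sim x^n$ as $x \to \infty$, the ratio
$$\frac{1/(x+c_{n+1})^{n+1}}{1/(x+c_n)^n} = \frac{(x+c_n)^n}{(x+c_{n+1})^{n+1}} \sim \frac{1}{x} \longrightarrow 0,$$
confirming that $\{1/(x+c_n)^n\}$ is a legitimate asymptotic sequence at infinity. In particular, $1/(x+c_N)^N \sim 1/x^N$, so the two candidate error terms $o(1/x^N)$ and $o(1/(x+c_N)^N)$ coincide.

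Next, I would apply the binomial theorem in the form
$$\frac{1}{(x+c)^n} = \frac{1}{x^n}\left(1 + \frac{c}{x}\right)^{-n} = \frac{1}{x^n}\sum_{j = 0}^\infty (-1)^j\binom{n+j-1}{j}\frac{c^j}{x^j}$$
to expand each summand on the right-hand side as a Laurent series in $1/x$. Substituting these expansions into the finite partial sum $\sum_{n = 0}^N b_n/(x+c_n)^n$ with $b_n = \sum_{k=0}^n \binom{n}{k} a_k c_k^{n-k}$, I would swap the order of summation, group by powers of $1/x$, and compare the resulting coefficients against $a_0,\ldots,a_N$. The goal is to show that
$$\sum_{n=0}^N \frac{b_n}{(x+c_n)^n} - \sum_{n=0}^N \frac{a_n}{x^n} = o\!\left(\frac{1}{x^N}\right) \ (x \to \infty),$$
after which Proposition~\ref{asympprop} immediately gives the equivalence in both directions.

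The main obstacle, and the heart of the proof, is the combinatorial identity produced by this coefficient comparison. Because $c_n$ varies with $n$, one cannot simply make the single substitution $y = x + c$ to reduce to a constant shift; instead, one must track a double sum in $n$ and $j$ (the binomial-series index) and exhibit the claimed formula for $b_n$ as the unique inverse of the resulting triangular system. A clean way to carry this out is via generating functions: encoding the sequences $\{a_k\}$ and $\{b_n\}$ as formal objects in $\CC[[T]]$ with $T = 1/x$, the relation $b_n = \sum_k \binom{n}{k} a_k c_k^{n-k}$ should appear as a binomial transform whose matrix is exactly the one inverted by the binomial expansion of $1/(x+c_n)^n$. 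This reduces the identity to the scalar binomial theorem $\sum_k \binom{n}{k} u^k v^{n-k} = (u+v)^n$, applied termwise.

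Once the coefficient identity is in hand, the two directions of the equivalence are symmetric. If $f(x) \sim \sum a_n/x^n$ at infinity, then by the identity the partial sums of the right-hand expansion differ from those of the left by $o(1/x^N)$, and Proposition~\ref{asympprop} promotes this to the asymptotic expansion $f(x) \sim \sum_n b_n/(x+c_n)^n$ with respect to $\{1/(x+c_n)^n\}$. The reverse implication follows because the same combinatorial identity is invertible, expressing each $a_n$ as a finite linear combination of $b_0,\ldots,b_n$ with coefficients built from the $c_k$'s.
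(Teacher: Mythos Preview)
Your overall strategy---expand each $1/(x+c_n)^n$ via the negative binomial series, match coefficients of $1/x^m$, and invoke Proposition~\ref{asympprop}---is exactly the ``straightforward application of the binomial theorem'' the paper has in mind. So at the level of method there is nothing to add.

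There is, however, a real problem with the combinatorial step, and it is not merely a matter of bookkeeping. You assert that the required identity ``reduces to the scalar binomial theorem $\sum_k \binom{n}{k} u^k v^{n-k} = (u+v)^n$,'' but it does not. The expansion you correctly wrote down,
\[
\frac{1}{(x+c)^n}=\sum_{j\ge 0}(-1)^j\binom{n+j-1}{j}\frac{c^{\,j}}{x^{\,n+j}},
\]
involves the \emph{negative} binomial coefficients $\binom{n+j-1}{j}$, and inverting that triangular system gives coefficients $\binom{n-1}{k-1}$, not $\binom{n}{k}$. Concretely, take $a_0=0$, $a_1=a_2=1$ and $c_n\equiv -1$ (the derangement case of Proposition~\ref{basicp}). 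The stated formula gives
\[
b_2=\binom{2}{1}a_1c_1+\binom{2}{2}a_2 = -2+1=-1,
\]
whereas the correct coefficient is $D_1=0$; and indeed
\[
\frac{a_1}{x+c_1}+\frac{b_2}{(x+c_2)^2}-\frac{a_1}{x}-\frac{a_2}{x^2}=\frac{a_1c_1}{x^2}+O\!\left(\frac{1}{x^3}\right)
\]
is \emph{not} $o(1/x^2)$. A short generating-function check confirms this: with $c_k\equiv c$ and $B(Y)=\sum b_nY^n$, $A(Y)=\sum a_kY^k$, the formula $b_n=\sum_k\binom{n}{k}a_kc^{n-k}$ yields $B(Y)=\frac{1}{1-cY}A\!\left(\frac{Y}{1-cY}\right)$, hence $B\!\left(\frac{1}{x+c}\right)=\left(1+\frac{c}{x}\right)A(1/x)\neq A(1/x)$.

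In short, the lemma as printed carries a typographical slip in the binomial coefficient: with $\binom{n-1}{k-1}$ in place of $\binom{n}{k}$ (equivalently, after the index shift $n\mapsto n+1$, $k\mapsto k+1$ used in Proposition~7.4), your argument goes through verbatim and does reduce to the negative-binomial identity. Before claiming the identity ``should appear as a binomial transform,'' you should carry out the coefficient comparison explicitly for $N=2$; that immediately exposes the discrepancy and tells you the correct form.
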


If  the sequence $c_n = t$ is constant, then the  sequence $b_n = \sum_{k = 0}^n {n \choose k} a_k t^{n-k}$ is called the {\bf $t$-binomial transform} of the sequence $a_n$.  The $0$-binomial transform of the sequence $n!$ is the sequence $n!$, which is equal to the number of permutations of any $n$-element set.  The $(-1)$-binomial transform of the sequence $n!$ is the sequence $D_n = n!\sum_{k = 0}^n \frac{(-1)^k}{k!}$, which is equal to the number of derangements of any $n$-element set.   Likewise, the $1$-binomial transform of the sequence $n!$ is the sequence $A_n = n! \sum_{k = 0}^n \frac{1}{k!}$, which is equal to the number of arrangements of any $n$-element set. Thus, an application of the  lemma to (\ref{asex2}) yields the following.

\begin{proposition}\label{basicp}
One has the following asymptotic expansions.
\begin{enumerate} 
\item $\displaystyle\PP(x) \simeq \sum_{n=0}^\infty \frac{D_{n}}{(\log x-1 )^{n+1}} \ (x \to \infty)$.
\item $\displaystyle\PP(ex) \simeq \sum_{n=0}^\infty \frac{D_{n}}{(\log x )^{n+1}} \ (x \to \infty)$.
\item $\displaystyle\PP(x) \simeq \sum_{n=0}^\infty \frac{A_{n}}{(\log x+1)^{n+1}} \ (x \to \infty)$.
\item $\displaystyle\PP(x/e) \simeq \sum_{n=0}^\infty \frac{A_{n}}{(\log x )^{n+1}} \ (x \to \infty)$.
\end{enumerate}
\end{proposition}

\begin{remark}
The two consequences below of Proposition \ref{basicp} demonstrate that combinatorial relationships between various sequences related to the factorials can yield interesting consequences for the asymptotic behavior of $\pi(x)$. 
\begin{enumerate}
\item The sequence $D_n$ begins $1,0,1,2,9,44,265,1854,\ldots$.   Since $D_1 = 0$ and $D_2 = 1$, the proposition provides yet another explanation for the fact that $\PP(x) - \frac{1}{\log x -1}$ is asymptotic to $\frac{1}{(\log x)^3}$.
\item The sequence  $A_n$  begins $1,2,5,16,65,326,1957,13700,\ldots$.    Thus one has
$$\PP(x/e) \simeq \frac{1}{\log x} +  \frac{2}{(\log x)^2} + \frac{5}{(\log x)^3} + \frac{16}{(\log x)^4} + \frac{65}{(\log x)^5}  + \cdots \ (x \to \infty).$$
At the same time, squaring the asymptotic expansion $\PP(x) \simeq \sum_{k = 0}^\infty \frac{n!}{(\log x)^{n+1}}$ of $\PP(x)$ yields
$$\PP(x)^2 \simeq \frac{1}{(\log x)^2} +  \frac{2}{(\log x)^3} + \frac{5}{(\log x)^4} + \frac{16}{(\log x)^5} + \frac{64}{(\log x)^6}  + \cdots \ (x \to \infty),$$
It follows that $ \PP(x/e) - \PP(x)^2 \log x  \sim \frac{1}{(\log x)^5} \ (x \to \infty).$
Consequently, one has $\PP(x)^2 \log x  < \PP(x/e)$ for all sufficiently large $x$, which is equivalent to Ramanujan's famous inequality
$$\pi(x)^2 < \frac{ex}{\log x} \pi(x/e), \quad x \gg 0.$$
\end{enumerate}
\end{remark}

For every nonnegative integer $n$, we let $r_n(X)$ denote the monic integer polynomial
$$r_n(X) = n! \sum_{k = 0}^n \frac{X^k}{k!}  =  \sum_{k = 0}^n \frac{n!}{k!} {X^k}\in \ZZ[X].$$ 
For any $z \in \CC$, the sequence $r_n(z)$ is the $z$-binomial transform of the sequence $n!$, and one has $r_n(0) = n!$, $r_n(-1) = D_{n}$, and $r_n(1) = A_{n}$ for all $n$, so the family of sequences $r_n(z)$ interpolates those three sequences.   
(Incidentally, by \cite[(14.1.9)]{cuyt}, one also has $r_n(z) =   e^z \Gamma(n+1,z)$ for all $z \in \CC$.)
Thus, from Lemma \ref{asympprop2b} we obtain the following generalization of Proposition \ref{basicp}.

\begin{proposition}\label{chlem}
For all $t \in \RR$ one has the asymptotic expansion
$$\PP(e^{-t }x) \simeq \sum_{n=0}^\infty \frac{r_n(t)}{(\log x)^{n+1}} \ (x \to \infty).$$
%where $$r_n(z) = \sum_{k = 0}^n\frac{n!}{k!}  z^k = e^z \Gamma(n+1,z) = z^{n+1} e^z E_{-n}(z)$$
%and
%$$r_n(z) =   \frac{z^{n+1}}{z -n \, -} \  \frac{n}{z - n + 2 \,-}\  \frac{2(n-1)}{z-n+4 \,-}\  \cfrac{3(n-2)}{z-n+6 \, - } \ \cdots \ \cfrac{n(1)}{z-n+2n} $$ for all $z \in \CC$ and for every positive integer $n$ (where the given expressions assume the limiting value at $z$ if undefined at $z$).
\end{proposition}

Note that, since $r_n(X) \in \ZZ[X]$, one has $r_n(k) \in \ZZ$ for all $n$ and all $k \in \ZZ$.  The integer sequence $r_n(2)$ is OEIS sequence A010842, and thus, for example, $r_n(2)$ is the number of ways to split the set $\{1,2,\ldots,n\}$ into two disjoint subsets $S$ and $T$ and linearly order $S$ and then choose a subset of $T$.   Also, the integer sequence $r_n(-2)$ is OEIS sequence A000023.   Note also that $r_n(z) \sim   n! e^z =  r_n(0)e^z  \ (n \to \infty)$ for all $z \in \CC$.  Thus, for example, one has $D_n  \sim \,  n! e^{-1} \ (n \to \infty)$ and $A_n   \sim \,  n!e \ (n \to \infty),$ which are well-known asymptotics for the sequences $D_n$ and $A_n$.

\begin{remark}
We provide a measure-theoretic proof of Proposition \ref{chlem}, and corresponding interpretations of the $t$-binomial transform and the polynomials $r_n(X)$, as follows.  Let $\mu$ be a measure on $\RR$.  For any fixed $t \in \RR$, let $T_t(\mu)$ be the measure on $\RR$ such that $T_t(\mu)(A) = \mu(T_{-t}(A))$ for any $\mu$-measurable set $A$, where $T_t(x) = x+t$ for all $x \in \RR$.  The $n$th moment of $T_t(\mu)$ is 
$m_n(T_t(\mu)) = \int_{-\infty}^\infty u^n\, dT_t(\mu)(u) = \int_{-\infty}^\infty (u+t)^n\, d\mu (u) = \sum_{k = 0}^n {n \choose k} m_k(\mu)t^{n-k},$
that is, the moment sequence of $T_t(\mu)$ is the $t$-binomial transform of the moment sequence of $\mu$.  Moreover, if $\mu$ has a density function, say, $\rho$, then $T_t(\mu)$ has density function $\rho \circ T_{-t}$. Thus, if $\gamma_0$ denotes the exponential distribution on $[0,\infty)$, then, for any fixed $t \in \RR$, the measure $T_t(\gamma_0)$ is the  probability measure on $[t,\infty)$ of density $e^{t-u} \, du$, and the $n$th moment of $T_t(\gamma_0)$ is given by  $m_n(T_t(\gamma_0)) = r_n(t)$, while also
$m_n(T_t(\gamma_0))  = \int_t^\infty  u^n e^{t-u} \, du  = e^t \Gamma(n+1, t)$.
Proposition \ref{chlem} is then equivalent to the obvious restatement $\PP(e^{-t }x) \simeq \sum_{n=0}^\infty \frac{m_n(T_t(\gamma_0))}{(\log x)^{n+1}} \ (x \to \infty)$ of (\ref{asex2}).
\end{remark}

For any positive integer $n$, let $D_n'$ denote the number of indecomposable derangements of $\{1,2,3,\ldots, n\}$, and let $D_0' = -1$.    The sequence $\{D_n'\}$ is OEIS sequence A259869 and begins $-1,0, 1, 2, 8, 40, 244, 1736,  \ldots$.  It has generating function $\sum_{n = 0}^\infty D_n' z^n = -\frac{1}{\sum_{n = 0}^\infty D_n z^n}.$  Proposition \ref{basicp}(2) therefore has the following corollary.

\begin{corollary}
One has the asymptotic expansion
$$\frac{1}{\PP(ex)} \simeq - \sum_{n=0}^\infty \frac{D_{n}'}{(\log x )^{n-1}} \ (x \to \infty).$$
\end{corollary}

We can  also invert the expansion  $\frac{\li(x)}{x} \simeq \sum_{n = 0}^\infty \frac{n!}{(\log x)^{n+1}} \ (x \to \infty)$ 
as one does formal power series under composition.  Let $G(z) = - \sum_{n= 1}^\infty G_n z^n$ denote  the series inversion of the formal power series $F(z) = \sum_{n = 0}^\infty n! z^{n+1},$
that is, let $G(z)$ denote the unique formal power series  such that  $z = G(F(z)) = F(G(z)).$
One has $G_1 = -1$, and the sequence $\{G_n\}_{n = 2}^\infty$ appears as OEIS sequence A134988, with first several terms given by  $1, 0, 1, 4, 22, 144, 1089, 9308,  \ldots$.  In particular, $G_n$ for any $n \geq 2$ is the number of generators in arity $n$ of the free non-symmetric operad $\mathfrak{Lie}$ \cite{salv}, and $G_n$ for all $n \geq 2$ also has a de Rham cohomological interpretation as $\dim_\QQ H^{n-2}({\mathfrak M}^\delta_{0,n+1}, \QQ)$, where ${\mathfrak M}^\delta_{0,n}$ for any $n \geq 3$ is a certain smooth affine scheme that approximates the moduli space ${\mathfrak M}_{0,n}$, defined over $\ZZ$, of smooth $n$-pointed curves of genus 0  \cite{brow}.    It is also known that $\frac{z}{G(z)} = \sum_{n = 0}^\infty J_nz^n,$
where $J_n$ is the number of {\it stabilized-interval-free} permutations of $\{1, 2, \ldots , n\}$ and is OEIS sequence A075834, with first several terms given by $1,1,1,2,7,34, 206,1476, \ldots$ \cite{call}.    The definitions of the sequences $G_n$ and $J_n$ yield  statements (4) and (5) of the following proposition, which summarizes the asymptotic expansions proved in this section and in Section 1.2.

\begin{proposition}\label{LP2}
Let $L(x)$ be any function such that $L(x) = \log x + o ((\log x)^{-t}) \  (x \to \infty)$ for all $t > 0$, and let $P(x)$ be any function such that $P(x) = \frac{\li(x)}{x} + o ((\log x)^{-t}) \  (x \to \infty)$ for all $t > 0$.   One has the following asymptotic expansions.
\begin{enumerate}
\item $\displaystyle  P(e^{-t }x) \simeq \sum_{n=0}^\infty \frac{r_n(t)}{L(x)^{n+1}} \ (x \to \infty)$ for all $t \in \RR$, where $r_n(t) = \sum_{k = 0}^n \frac{n!}{k!} {t^k}$, $r_n(0) = n!$, $r_n(1) = A_n$, and $r_n(-1) = D_n$ for all $n$.
\item $\displaystyle L(x) - \frac{1}{P(x)} \simeq  \sum_{n = 0}^{\infty} {\frac {I_{n+1}}{L(x)^{n}}} \ (x \to \infty).$
\item $\displaystyle  \frac{1}{P(ex)} \simeq - \sum_{n = 0}^{\infty} {\frac {D_{n}'}{L(x)^{n-1}}} \ (x \to \infty).$
\item $\displaystyle \frac{1}{L(x)} \simeq -\sum_{n = 1}^\infty G_n P(x)^n \ (x \to \infty)$.
\item $\displaystyle {L(x)} \simeq   \sum_{n = -1}^\infty J_{n+1} P(x)^n \ (x \to \infty)$.
\end{enumerate}
\end{proposition}

\section{Generalizations to arithmetic semigroups and number fields}

All of the asymptotic expansions of $\PP(x)$ we have proved in this paper can be generalized to any number field  $K$ as follows.   For all $x > 0$, let $\pi_K(x)$ denote the number of prime ideals of ${\mathcal O}_K$ of norm less than or equal to $x$.  The {\bf Landau prime ideal theorem}, proved by Landau in 1903, states that 
$$\pi_K(x) = \li(x) + O\left(xe^{-c_K {\sqrt  {\log x }}}\right)\ (x \to \infty)$$
for some constant $c_K > 0$ depending on $K$.  Landau's  theorem implies that
$$\frac{\pi_K(x)}{x} = \frac{\li(x)}{x} + O((\log x)^{-t}) \ (x \to \infty)$$
for all $t > 0$.   It follows that the function  $\frac{\pi_K(x)}{x}$ satisfies all of the properties and asymptotic expansions  that we have proved for the function $\PP(x)$ (and, for example, it satisfies the property required for the functions $P(x)$ in Propositions \ref{LP} and \ref{LP2}).

In fact, we may extend our analysis to any arithmetic semigroup $G$ that satisfies a certain generalization of Axiom A \cite{kno}.  An {\bf arithmetic semigroup} is a commutative monoid $G$ (written multiplicatively) that is freely generated by a countable set $P$, called the set of {\bf primes} of $G$, and that is equipped with a map $|\cdot|: G \longrightarrow \RR_{>0}$, called the {\bf norm} map of $G$, such that the following conditions hold.
\begin{enumerate}
\item  $|1| = 1$.
\item $|p| > 1$ for all $p \in P$.
\item  $|ab| = |a||b|$ for all $a,b \in G$. 
\item  For all $x > 0$, the number $N_G(x) = \#\{a \in G: |a| \leq x\}$ of elements of $G$ of norm at most $x$ is finite.
\end{enumerate}
In fact, condition (4) can be replaced with
\begin{enumerate}
\item[$4'.$]  For all $x > 0$, the number $\pi_G(x) = \#\{p \in P: |p| \leq x\}$ of primes of $G$ of norm at most $x$ is finite.
\end{enumerate} 
For all $x \geq 1$, we let $\PP_G(x) = \frac{\pi_G(x)}{N_G(x)}$ denote the probability that a randomly selected element of $G$ of norm less than or equal to $x$ is prime.

 Let $G$ be any  arithmetic semigroup.   For any $\delta > 0$, we let $G_\delta$ denote the commutative monoid $G$ together with norm map $|\cdot|^{\delta}: G \longrightarrow \RR_{>0}$, where $|\cdot|$ is the norm map of the arithmetic semigroup $G$.  It is clear that $G_\delta$ is also an arithmetic semigroup, and one has
$$N_{G_\delta}(x) =  \#\{a \in G: |a|^\delta \leq x\} =  \#\{a \in G: |a| \leq x^{1/\delta}\} = N_G(x^{1/\delta}),$$
and likewise $\pi_{G_\delta}(x)  =  \pi_G(x^{1/\delta}),$
for all $x > 0$.   Suppose that there exist constants  $\delta > 0$ and $A > 0$ such that
\begin{align*}
N_G(x) = Ax^\delta+ O(x^\delta (\log x)^{-t}) \ (x \to \infty)
\end{align*}
for all $t > 0$.  By \cite[p.\ 152]{weg}, one then has
\begin{align*}
\pi_G(x) = \li(x^\delta) + O(x^\delta (\log x)^{-t}) \ (x \to \infty)
\end{align*}
for all $t > 0$. 
Moreover, by \cite[p. 304]{ny}, one has $A \delta = \operatorname{Res}_{s = \delta} \zeta_G(s) = \lim_{\operatorname{Re}{s} \to \delta^+} (s-\delta) \zeta_G(s),$ where the series defining the {\bf zeta function $\zeta_G(s) = \sum_{a \in G} \frac{1}{|a|^{s}}$ of $G$} is absolutely convergent for $\operatorname{Re}(s) > \delta$ and has meromorphic continuation to $\operatorname{Re}(s) \geq \delta$ with a unique (simple) pole at $s = \delta$.  For the proofs of these results,  by replacing $G$ with $G_\delta$,  one may assume  without loss of generality that $\delta = 1$ (which is assumed throughout \cite{ny}).  %(Note also that $\zeta_{G_\delta}(s) = \zeta_G(\delta s).$)

Using  Corollary \ref{wsim}, Lemma \ref{asympprop}, (\ref{asex}), and the results mentioned above, we can generalize Theorem \ref{maincontthm1} and Corollary \ref{maincor} as follows.

\begin{theorem}\label{arithsemigroup}
Let $G$ be an arithmetic semigroup, and let $\delta > 0$.   Let $w_n(x)$ for any nonnegative integer $n$ denote the $n$th approximant of the continued fraction
$$\frac{1}{x-1 \,-} \  \frac{1}{x-3 \,-} \  \frac{4}{x-5 \,-}\  \frac{9}{x -7\,-} \ \frac{16}{x-9 \,-}\  \cdots.$$
\begin{enumerate}
\item The following conditions are equivalent. 
\begin{enumerate}
%\item There is an $A > 0$ such that, for all $t \in \RR$, one has $$N_G(x) = Ax^\delta+ O(x^\delta (\log x)^t) \ (x \to \infty).$$
\item For all $t > 0$, one has $\pi_G(x) = \li(x^\delta) + O(x^\delta (\log x)^{-t}) \ (x \to \infty).$
\item $\frac{\pi_G(x)}{x^\delta}$ has the asymptotic expansion
$\frac{\pi_G(x)}{x^\delta} \, \simeq \, \sum_{k = 0}^\infty \frac{k!}{(\delta \log x)^{k+1}} \ (x \to \infty).$
\item $\pi_G(x)$ has the asymptotic continued fraction expansion
 \begin{align*}
{\pi_G(x)} \, \simeq \, \cfrac{\frac{x^\delta}{\delta \log x}}{1 \,-} \  \cfrac{\frac{1}{\delta \log x}}{1 \,-} \  \cfrac{\frac{1}{\delta \log x}}{1 \,-}\  \cfrac{\frac{2}{\delta \log x}}{1 \,-}\  \cfrac{\frac{2}{\delta \log x}}{1 \,-} \  \cfrac{\frac{3}{\delta \log x}}{1 \,-}\  \cfrac{\frac{3}{\delta \log x}}{1 \,-} \ \cdots \ (x \to \infty).
\end{align*}
\item $\pi_G(x)$ has the asymptotic continued fraction expansion 
\begin{align*}
\pi_G(x) \, \simeq \, \frac{x^\delta}{\delta \log x-1 \,-} \  \frac{1}{\delta \log x-3 \,-} \  \frac{4}{\delta\log x-5 \,-}\  \frac{9}{\delta\log x-7 \,-} \ \frac{16}{\delta\log x-9 \,-}\  \cdots  \ (x \to \infty).
\end{align*}
\item $\frac{\pi_G(e^{x/\delta})}{e^x}-w_n(x) \sim \frac{(n!)^2}{x^{2n+1}} \ (x \to \infty)$ for all nonnegative integers $n$.
\item $\frac{\pi_G(e^{x/\delta})}{e^x}-w_n(x) =  O\left( \frac{1}{x^{2n+1}}\right) \ (x \to \infty)$ for all nonnegative integers $n$.
\item $w_n(x)$ for every nonnegative integer $n$ is the unique rational function $w(x) \in \CC(x)$ of degree at most $n$ such that $\frac{\pi_G(e^{x/\delta})}{e^x}-w_n(x) = O \left( \frac{1}{x^{2n+1}}\right) \ (x \to \infty)$. 
\end{enumerate}
\item Suppose that there exists a constant $A > 0$ such that
\begin{align}\label{Nhyp}
N_G(x) = Ax^\delta+ O(x^\delta (\log x)^{-t}) \ (x \to \infty)
\end{align}
for all $t > 0$.  Then $A \delta = {\operatorname{Res}_{s = \delta} \zeta_G(s)}$, the equivalent conditions (1)(a)--(g) hold, and one has the asymptotic continued fraction expansions
 \begin{align*}
\PP_G(x) \, \simeq \, \cfrac{\frac{1}{A\delta \log x}}{1 \,-} \  \cfrac{\frac{1}{\delta \log x}}{1 \,-} \  \cfrac{\frac{1}{\delta \log x}}{1 \,-}\  \cfrac{\frac{2}{\delta \log x}}{1 \,-}\  \cfrac{\frac{2}{\delta \log x}}{1 \,-} \  \cfrac{\frac{3}{\delta \log x}}{1 \,-}\  \cfrac{\frac{3}{\delta \log x}}{1 \,-} \ \cdots  \ (x \to \infty),
\end{align*}
and
\begin{align*}
\PP_G(x)\,  \simeq \, \frac{1/A}{\delta \log x-1 \,-} \  \frac{1}{\delta \log x-3 \,-} \  \frac{4}{\delta\log x-5 \,-}\  \frac{9}{\delta\log x-7 \,-} \ \frac{16}{\delta\log x-9 \,-}\  \cdots  \ (x \to \infty),
\end{align*}
Moreover, the $w_n(x)$ for all nonnegative integers $n$ are precisely the best rational approximations of the functions $\frac{\pi_G(e^{x/\delta})}{e^x}$ and $A\PP_G(e^{x/\delta})$.
\end{enumerate}
\end{theorem}

\begin{proof}
It is clear that condition (1)(a) holds if and only if  $\frac{\pi_{G_\delta}(x)}{x} =  \frac{\li(x)}{x} + O( (\log x)^{-t}) \ (x \to \infty)$ for all $t > 0$.   Extending such an analysis to the entire statement of the theorem, we may assume without loss of generality, by replacing $G$ with $G_\delta$, that $\delta = 1$.  By (\ref{asex}) and Lemma \ref{asympprop}, conditions (1)(a) and (1)(b) are equivalent.   Moreover, conditions (1)(b)--(g) are equivalent by Corollary \ref{wsim} and the proof  in Section 3.1 of  Theorem \ref{maincontthm1}.  This proves statement (1).  Assuming the hypothesis of statement (2) (with $\delta = 1$), condition (1)(a) then follows from    \cite[p.\ 152]{weg}, one has $A  = {\operatorname{Res}_{s = 1} \zeta_G(s)}$ by \cite[p.\ 304]{ny},  and one has $A\PP_G(x)=  \frac{\pi_G(x)}{N_G(x)/A} =   \frac{\li(x)}{x} + O((\log x)^{-t}) \ (x \to \infty)$ for all  $t > 0$.  Therefore, as with statement (1), statement (2) follows from  (\ref{asex}), Lemma \ref{asympprop}, and Corollary \ref{wsim}.
\end{proof}

An arithmetic semigroup $G$ satisfies {\bf Axiom A}  if there exist positive constants $A$ and $\delta$ and a nonnegative constant $\eta < \delta$ such that  $N_G(x) = Ax^\delta+O(x^\eta) \ (x \to \infty)$  \cite[Chapter 4]{kno}.  Clearly, if $G$ is an arithmetic semigroup satisfying Axiom A, then the hypothesis (\ref{Nhyp}) of statement (2) of Theorem \ref{arithsemigroup} holds.  Thus, the various conditions in the theorem hold for any arithmetic semigroup satisfying Axiom A.   \cite[Chapters 4 and 5]{kno} provides a wide variety of interesting examples of arithmetic semigroups satisfying Axiom A, including the following defined for any number field.

\begin{example}[{\cite{kno}}]  Let $K$ be a number field.
\begin{enumerate}
\item Let $G$ be the monoid, under ideal multiplication, of all nonzero ideals of $\mathcal{O}_K$.  Then $G$ is an arithmetic semigroup satisfying Axiom A, with the norm map acting by $\mathfrak{a} \longmapsto N_{K/\QQ}(\mathfrak{a}) = \#( \mathcal{O}_K/\mathfrak{a})$, and with $\delta = 1$ and $\zeta_G(s) = \zeta_K(s)$, so that $A = \operatorname{Res}_{s = 1} \zeta_K(s)$, where $\zeta_K(s) = \sum_{\mathfrak{a} \in G} \frac{1}{(N_{K/\QQ}(\mathfrak{a}))^s}$ is the Dedekind zeta function of $K$.  Moreover, $\PP_G(x)$ for all $x \geq 1$ is the probability that a randomly selected ideal of ${\mathcal O}_K$ of norm less than or equal to $x$ is prime.  For example, if $K  = \QQ(i)$, then $A = \pi/4$, and if $K = \QQ(\sqrt{2})$, then $A =\frac{1}{\sqrt{2}}\log(1+\sqrt{2})$.  Also, if $K = \QQ$, then $\zeta_G(s) = \zeta(s)$ is the Riemann zeta function, $A = 1$, and $\PP_G(x) =  \frac{\pi(x)}{\lfloor x \rfloor}$.
\item Let $G$ be the monoid,  under direct product, of all isomorphism classes of $\mathcal{O}_K$-modules of finite cardinality.  Then $G$ is an arithmetic semigroup satisfying Axiom A, with the norm map acting by $M \longmapsto \#M$, and with $\delta = 1$ and $\zeta_G(s) = \prod_{n =1}^\infty \zeta_K(ns)$, so that $A = \operatorname{Res}_{s = 1} \zeta_K(s) \prod_{n = 2}^\infty \zeta_K(n)$.  Moreover, $\PP_G(x)$ for all $x \geq 1$ is the probability that an $\mathcal{O}_K$-module of cardinality less than or equal to $x$, randomly selected among a set of representatives of the isomorphism classes, is indecomposable. % Note that, if $K = \QQ$, then an $\mathcal{O}_K$-module is equivalently an abelian group.
\end{enumerate}
\end{example}

\end{document}